\newtheorem{theorem}{Theorem}
\newtheorem{corollary}{Corollary}
\newtheorem{proposition}{Proposition}
\newtheorem{lemma}{Lemma}
\newtheorem{remark}{Remark}
\newtheorem{observation}{Observation}
\newtheorem{conjecture}{Conjecture}
\newtheorem{definition}{Definition}
\def\sqrt{\textup{sqrt}}
\def \T{\textup{T}}
\def \diag{\textup{diag}}
\def \Res{\textup{Res}}
\newcommand{\rmnum}[1]{\romannumeral #1}
\newcommand\restr[2]{{% we make the whole thing an ordinary symbol
		\left.\kern-\nulldelimiterspace % automatically resize the bar with \right
		#1 % the function
%		\vphantom{\big|} % pretend it's a little taller at normal size
		\right|_{#2} % this is the delimiter
}}
\newcommand{\Rmnum}[1]{\expandafter\@slowromancap\romannumeral #1@}
\title{On the  determinant of the $Q$-walk matrix of rooted product with a  path}
\author{\small Zhidan Yan$^{{\rm a}}$\quad\quad Lihuan Mao$^{\rm b}$\quad\quad Wei Wang$^{\rm a}\thanks{Corresponding author: wangwei.math@gmail.com}$
\\
{\footnotesize$^{\rm a}$School of Mathematics, Physics and Finance, Anhui Polytechnic University, Wuhu 241000, China}\\
{\footnotesize$^{\rm b}$School of Mathematics and Data Science, Shaanxi University of Science and Technology, Xi'an 710021, China}
}
\date{}
\begin{document}
 \maketitle

\begin{abstract}
Let $G$ be an $n$-vertex graph and $Q(G)$ be its signless Laplacian matrix. The $Q$-walk matrix of $G$, denoted by $W_Q(G)$, is $[e,Q(G)e,\ldots,Q^{n-1}(G)e]$, where $e$ is the all-one vector.  Let $G\circ P_m$ be the graph obtained from $G$ and $n$ copies of the path $P_m$  by identifying the $i$-th vertex of $G$ with an endvertex of the $i$-th copy of $P_m$ for each $i$. We prove that, 
 $$\det W_Q(G\circ P_m)=\pm (\det Q(G))^{m-1}(\det W_Q(G))^m$$
 holds for any $m\ge 2$. This gives a signless Laplacian counterpart of the following recently established identity \cite{wym2023}:
   $$\det W_A(G\circ P_m)=\pm (\det A(G))^{\lfloor\frac{m}{2}\rfloor}(\det W_A(G))^m,$$
      where $A(G)$ is the adjacency matrix of $G$ and $W_A(G)=[e,A(G)e,\ldots,A^{n-1}(G)e]$.
We also propose a conjecture  to unify the above two equalities.

\noindent\textbf{Keywords}: walk matrix; rooted product graph;  $Q$-walk matrix; Chebyshev polynomials

\noindent
\textbf{AMS Classification}: 05C50
\end{abstract}
\section{Introduction}
\label{intro}
Let $G$ be a simple graph with vertex set $\{1,\ldots,n\}$.  The \emph{adjacency matrix} of $G$ is the $n\times n$ symmetric matrix $A=(a_{i,j})$, where $a_{i,j}=1$ if $i$ and $j$ are adjacent;  $a_{i,j}=0$ otherwise.  For a graph $G$, the \emph{adjacency walk matrix} (or $A$-walk matrix) of $G$ is
\begin{equation*}
W_A=W_A(G):=[e,Ae,\ldots,A^{n-1}e],
\end{equation*}
where $e$ is the all-one vector.

Let $H$ be a rooted graph. The \emph{rooted product graph} \cite{godsil1978,schwenk1974} of $G$ and $H$, denoted by $G\circ H$, is a graph obtained from $G$ and $n$ copies of $H$ by identifying the  root vertex of the $i$-th copy of $H$ with  vertex $i$ of $G$ for $i=1,2,\ldots,n$. We are interested in the case that the rooted graph $H$ is the rooted path $P_m$ of order $m$ (taking an endvertex as the root vertex). For this special case, the authors established the following formula concerning  the determinant of  $W(G\circ P_m)$, which was conjectured in \cite{mao2022}.
\begin{theorem}[\cite{wym2023}]\label{twa}
	For any graph $G$ and integer $m\ge 2$,
\begin{equation}\label{wa}
\det W_A(G\circ P_m)=\pm (\det A(G))^{\lfloor\frac{m}{2}\rfloor}(\det W_A(G))^m.
\end{equation} 
\end{theorem}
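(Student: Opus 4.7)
My plan is to reduce the determinant $\det W_A(G\circ P_m)$ to a polynomial identity in the eigenvalues of $A$, and then prove that identity by a divisibility-plus-normalisation argument. Order the $nm$ vertices of $G\circ P_m$ as pairs $(k,i)$ with $k$ the position along the attached path and $i$ the copy, so that $B:=A(G\circ P_m)=E_{11}\otimes A+P_m\otimes I_n$, where $E_{11}$ is the $m\times m$ matrix with a single $1$ at position $(1,1)$ and $P_m$ is the adjacency matrix of $P_m$; then $\tilde e=\mathbf{1}_m\otimes e$. Introduce polynomials $p^{(j)}_k(x)$ by $p^{(0)}_k=1$, $p^{(j+1)}_1=xp^{(j)}_1+p^{(j)}_2$, $p^{(j+1)}_k=p^{(j)}_{k-1}+p^{(j)}_{k+1}$ for $2\le k\le m-1$, and $p^{(j+1)}_m=p^{(j)}_{m-1}$; a straightforward induction gives $B^j\tilde e=\sum_{k=1}^m\mathbf{u}_k\otimes p^{(j)}_k(A)e$.

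Using the spectral decomposition of the real symmetric matrix $A$, each $m$-dimensional subspace $\mathrm{span}\{\mathbf{u}_k\otimes x_\lambda\}_{k=1}^m$ is $B$-invariant, and $B$ acts on it as $M_\lambda:=\lambda E_{11}+P_m$. Writing $e=\sum_\lambda c_\lambda x_\lambda$ gives $B^j\tilde e=\sum_\lambda c_\lambda(M_\lambda^j\mathbf{1}_m)\otimes x_\lambda$, and pulling out the eigenvector matrix, $\mathrm{diag}(c_\lambda)$, and a Vandermonde factor together with the classical identity $\det W_A(G)=\pm\bigl(\prod_\lambda c_\lambda\bigr)\prod_{i<j}(\lambda_j-\lambda_i)$ yields
$$
\det W_A(G\circ P_m)=\pm\bigl(\det W_A(G)\bigr)^m\cdot\det\bigl[p^{(j)}_k(\lambda_i)\bigr]_{(i,k),\,j},
$$
where the last matrix is $nm\times nm$ with rows $(i,k)$, $1\le i\le n$, $1\le k\le m$, and columns $j=0,1,\ldots,nm-1$. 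When $\det W_A(G)=0$, the columns of $W_A(G\circ P_m)$ all lie in the proper subspace $\bigoplus_k(\mathbf{u}_k\otimes\mathrm{range}\,W_A(G))$, so both sides of the target identity vanish.

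It therefore suffices to prove the polynomial identity
$$
\det\bigl[p^{(j)}_k(\lambda_i)\bigr]=\pm\prod_{i=1}^n\lambda_i^{\lfloor m/2\rfloor}\prod_{1\le i<j\le n}(\lambda_j-\lambda_i)^m
$$
in $\mathbb{Z}[\lambda_1,\ldots,\lambda_n]$. For divisibility by $(\lambda_j-\lambda_i)^m$: setting $\lambda_i=\lambda_j$ makes the $m$ rows indexed by $(i,\cdot)$ pairwise equal to those indexed by $(j,\cdot)$, and multilinearity in the $m$ row-differences forces a zero of order at least $m$ in $\lambda_j-\lambda_i$. For divisibility by $\lambda_i^{\lfloor m/2\rfloor}$: setting $\lambda_i=0$ turns $M_{\lambda_i}$ into $P_m$, which is invariant under the reversal $k\mapsto m+1-k$; since $\mathbf{1}_m$ is also fixed by this reversal, $p^{(j)}_k(0)=p^{(j)}_{m+1-k}(0)$, producing $\lfloor m/2\rfloor$ pairs of equal rows in the $i$-th block and hence a zero of order at least $\lfloor m/2\rfloor$ in $\lambda_i$. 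These divisors are pairwise coprime, so their product divides the LHS.

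The main obstacle is showing that the quotient $Q(\lambda_1,\ldots,\lambda_n)$ of LHS by RHS is actually $\pm1$. Symmetry of $Q$ is automatic because swapping two $\lambda$'s changes LHS (by permuting two blocks of $m$ rows) and RHS (through the exponent $m$ on the Vandermonde factor) by the same sign $(-1)^m$. A naive Leibniz bound, however, gives $\deg_{\lambda_i}\text{LHS}\le m^2(n-1)$, which exceeds $\deg_{\lambda_i}\text{RHS}=(n-1)m+\lfloor m/2\rfloor$, so the leading terms must cancel by a nontrivial amount. I plan to control these cancellations using the shift identity $p^{(j)}_k=s_{m-k}(S)\,p^{(j)}_m$, where $S$ is the shift $j\mapsto j+1$ and $s_r$ satisfies $s_0=1,s_1=y,s_{r+1}=ys_r-s_{r-1}$; this expresses the first $m-1$ row-blocks of the matrix on the LHS as explicit integer-coefficient combinations of shifted copies of the $m$-th row-block, implementing a block-transformation of determinant $\pm1$. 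Tracking the leading coefficients in each $\lambda_i$ through this reduction is then expected to yield $\deg_{\lambda_i}\text{LHS}=(n-1)m+\lfloor m/2\rfloor$ with leading coefficient $\pm1$, pinning $Q$ down to $\pm1$. Substituting $\prod_i\lambda_i=(-1)^n\chi_A(0)=\det A$ up to sign then completes the proof of $\det W_A(G\circ P_m)=\pm(\det A)^{\lfloor m/2\rfloor}(\det W_A(G))^m$.
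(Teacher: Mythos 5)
Your overall route is genuinely different from the published one: instead of computing explicit eigenvectors of $A(G\circ P_m)$, applying the eigenvalue--eigenvector formula for walk determinants (Lemma \ref{basicW}) together with Schur's identity (Lemma \ref{sch}), and then evaluating a product over zeros as a Chebyshev resultant, you reduce everything to a single polynomial identity $\det\bigl[p_k^{(j)}(\lambda_i)\bigr]=\pm\prod_i\lambda_i^{\lfloor m/2\rfloor}\prod_{i_1<i_2}(\lambda_{i_2}-\lambda_{i_1})^m$ in $\mathbb{Z}[\lambda_1,\ldots,\lambda_n]$ and attack it by divisibility. The Kronecker decomposition, the Krylov-space treatment of the degenerate case $\det W_A(G)=0$, and both divisibility arguments (row subtraction for the factor $(\lambda_{i_2}-\lambda_{i_1})^m$; reversal symmetry of the path for the factor $\lambda_i^{\lfloor m/2\rfloor}$) are correct. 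One bookkeeping slip should be fixed: your displayed reduction $\det W_A(G\circ P_m)=\pm(\det W_A(G))^m\det[p_k^{(j)}(\lambda_i)]$ is inconsistent with your target identity --- together they would put an extra factor $\prod_{i_1<i_2}(\lambda_{i_2}-\lambda_{i_1})^m$ into the theorem. The correct reduction is $\det W_A(G\circ P_m)=\pm\bigl(\prod_i e^\T\xi_i\bigr)^m\det[p_k^{(j)}(\lambda_i)]$, and the Vandermonde power is divided out only when $\bigl(\prod_i e^\T\xi_i\bigr)^m$ is converted into $(\det W_A(G))^m$ via Lemma \ref{basicW}.

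The genuine gap is the normalization step, and it is precisely where the difficulty of the theorem lives. Divisibility only shows that the right-hand side \emph{divides} $\det\bigl[p_k^{(j)}(\lambda_i)\bigr]$; since, as you yourself compute, the naive degree bound $\deg_{\lambda_i}\le m^2(n-1)$ strictly exceeds the target degree $m(n-1)+\lfloor m/2\rfloor$, the quotient could a priori be a nonconstant symmetric polynomial, and nothing in your argument rules this out. The paragraph about the shift identity $p_k^{(j)}=s_{m-k}(S)\,p_m^{(j)}$ is a plan, not a proof: the shift $S$ moves entries across columns and past the right edge of the matrix (the shifted sequence needs values $p_m^{(j)}$ with $j\ge nm$), so as stated it is not a determinant-preserving block transformation, and the decisive claim that the leading coefficient in each $\lambda_i$ comes out to $\pm1$ is only ``expected.'' This missing cancellation analysis is exactly what the resultant identity accomplishes in the published proofs: for the adjacency case one needs $\Res\bigl(U_m(x)+tU_{m-1}(x),\sum_{k=0}^{m-1}U_k(x)\bigr)=\pm\,2^{m(m-1)}t^{\lfloor m/2\rfloor}$, the second-kind analogue of Lemma \ref{resw}, whose proof is a nontrivial trigonometric argument rather than a routine computation. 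Until you either carry out the leading-coefficient analysis in full, or replace it by such an evaluation (for instance by identifying the single-block polynomials with the $Z_k$ of Definition \ref{defz} and invoking the resultant machinery of Section \ref{pft}), the identity --- and hence the theorem --- is not proved.
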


The main aim of this paper is to give a signless Laplacian counterpart of the above formula. We recall that the \emph{signless Laplacian matrix} \cite{crs2007} of $G$, usually denoted by $Q(G)$ or simply $Q$, is $A(G)+D(G)$, where $A(G)$ is the adjacency matrix and $D(G)$ is the degree diagonal matrix. The \emph{signless Laplacian walk matrix} (or $Q$-walk matrix \cite{qjw2019}), denoted by $W_Q(G)$, is naturally defined by 
\begin{equation*}
W_Q(G)=[e,Qe,\ldots,Q^{n-1}e].
\end{equation*}
The main result of this paper is the following.
\begin{theorem}\label{twq}
		For any graph $G$ and integer $m\ge 2$,
 \begin{equation}\label{wq}
 \det W_Q(G\circ P_m)=\pm (\det Q(G))^{m-1}(\det W_Q(G))^m.
 \end{equation}
\end{theorem}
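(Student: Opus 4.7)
The plan is to pass to the spectral side of $Q(G)$ via a block-diagonal decomposition of $Q' := Q(G \circ P_m)$. First, reorder the vertices of $G \circ P_m$ so that the vertices $v_{i,1}, \ldots, v_{i,m}$ of the $i$-th copy of $P_m$ appear consecutively for each $i$. A direct computation shows
\[
Q' = I_n \otimes Q(P_m) + Q(G) \otimes E_{11}, \qquad e = e_n \otimes e_m,
\]
where $E_{11}$ is the $m\times m$ matrix with a single $1$ in the top-left corner. Letting $v_1, \ldots, v_n$ be an eigenbasis of $Q(G)$ with eigenvalues $\lambda_1, \ldots, \lambda_n$, each subspace $v_i \otimes \mathbb{R}^m$ is $Q'$-invariant, and $Q'$ acts on it as $M(\lambda_i) := Q(P_m) + \lambda_i E_{11}$.

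In the basis $\{v_i \otimes e_l\}_{i,l}$, the walk matrix splits into $n$ row-blocks of height $m$, the $i$-th being $c_i [e_m, M(\lambda_i)e_m, \ldots, M(\lambda_i)^{nm-1}e_m]$ with $c_i := v_i^\top e$. Extracting the local walk matrix $W_i := [e_m, M(\lambda_i) e_m, \ldots, M(\lambda_i)^{m-1} e_m]$ and the associated companion matrix $C_i := W_i^{-1} M(\lambda_i) W_i$ gives
\[
W_Q(G \circ P_m) = \diag(c_i I_m)_i \cdot \diag(W_i)_i \cdot K,
\]
where $K$ is the $nm \times nm$ matrix formed by stacking $[e_1, C_i e_1, \ldots, C_i^{nm-1} e_1]$ vertically over $i$. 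Consequently, $\det W_Q(G \circ P_m) = \bigl(\prod_i c_i\bigr)^m \cdot \prod_i \det W_i \cdot \det K$.

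Two identities are then needed. \emph{(i) Local walk identity:} $\det W_i = \pm \lambda_i^{m-1}$; this is a polynomial identity in $\lambda$, provable by induction on $m$ using the tridiagonal recurrence for the entries of $M(\lambda)^j e_m$. In particular, $\prod_i \det W_i = \pm (\det Q(G))^{m-1}$. \emph{(ii) Vandermonde--resultant identity:} $\det K = \pm V(\lambda)^m$ with $V(\lambda) := \prod_{i<j}(\lambda_j - \lambda_i)$. To see this, let $V^{(i)}$ denote the Vandermonde matrix of the eigenvalues of $M(\lambda_i)$; then $\diag(V^{(i)}) \cdot K$ is the full $nm \times nm$ Vandermonde in $\{\nu_l^{(i)}\}$, so that
\[
\det K = \prod_{i<i'} \Res_y\bigl(\chi_i(y), \chi_{i'}(y)\bigr), \qquad \chi_i(y) := \det(yI_m - M(\lambda_i)).
\]
Since $M(\lambda)$ is linear in $\lambda$, one writes $\chi_i = \chi_0 - \lambda_i \chi_1$ with $\lambda$-free $\chi_0, \chi_1$, and substituting roots of $\chi_i$ into $\chi_{i'}$ yields $\Res(\chi_i, \chi_{i'}) = (\lambda_i - \lambda_{i'})^m \Res(\chi_0, \chi_1)$. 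The crux is showing $\Res(\chi_0, \chi_1) = \pm 1$. This is a Chebyshev identity: with $F_n(y) := U_n((y-2)/2) + U_{n-1}((y-2)/2)$ (where $U_k$ is the Chebyshev polynomial of the second kind), one identifies $\chi_1 = F_{m-1}$ and $\chi_0 = F_m + F_{m-1}$, so that $\Res(\chi_0, \chi_1) = \Res(F_m, F_{m-1})$, which telescopes via the recurrence $F_n = (y-2)F_{n-1} - F_{n-2}$ all the way down to $\Res(F_1, F_0) = 1$.

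Assembling everything, and using the classical Vandermonde formula $\det W_Q(G) = \pm \bigl(\prod_i c_i\bigr) V(\lambda)$ valid when $Q(G)$ has simple spectrum, yields the claimed identity. The generic-spectrum restriction is removed by a standard polynomial-density argument: both sides are polynomials in the entries of $A(G)$ and agree on a Zariski-dense subset, hence everywhere. The main obstacle is the Chebyshev resultant $\Res(\chi_0, \chi_1) = \pm 1$ in step (ii); the single-vertex identity $\det W_i = \pm \lambda_i^{m-1}$ in step (i) is essentially the $n = 1$ case of the theorem itself and requires a separate inductive argument.
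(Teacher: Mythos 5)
Your overall architecture is sound and is, in mechanics, a genuinely different route from the paper: you diagonalize along the eigenvectors of $Q(G)$ using $Q(G\circ P_m)=I_n\otimes Q(P_m)+Q(G)\otimes E_{11}$, factor each block of the Krylov matrix through the local walk matrix $W_i$ and the companion matrix $C_i$, and evaluate the cross terms by $\det K=\pm\prod_{i<i'}\Res(\chi_i,\chi_{i'})$ with the telescoping $\Res(F_m+F_{m-1},F_{m-1})=\pm\Res(F_m,F_{m-1})=\cdots=\pm\Res(F_1,F_0)=\pm1$. That part (your step (ii)) is correct and is an attractive algebraic substitute for the paper's use of Schur's theorem on $\prod_j Z_{m-1}(\mu_i^{(j)})$ and its cross-eigenvalue product. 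The genuine gap is step (i): the identity $\det[e_m,M(\lambda)e_m,\ldots,M(\lambda)^{m-1}e_m]=\pm\lambda^{m-1}$ is only asserted ("provable by induction on $m$ using the tridiagonal recurrence"), and you concede at the end that it needs a separate argument. This is not a routine verification; it is precisely where the exponent $m-1$ -- the entire content of the theorem, and the feature that distinguishes the $Q$-case from the adjacency case with its exponent $\lfloor m/2\rfloor$ -- is produced. Indeed, applying your own Krylov/Vandermonde machinery to the single block $M(\lambda)$ reduces (i) to evaluating $\prod_j\sum_{k=0}^{m-1}Z_k^{(1)}(\mu^{(j)})$ over the roots $\mu^{(j)}$ of $\det(xI-M(\lambda))=Z_m^{(1)}(x)+(1-\lambda)Z_{m-1}^{(1)}(x)$, i.e. exactly to the resultant $\Res\left(W_m(x)+tW_{m-1}(x),\sum_{k=0}^{m-1}W_k(x)\right)=(-1)^{m(m-1)}2^{m(m-1)}(1-t)^{m-1}$ of fourth-kind Chebyshev polynomials, which is the one computation the paper singles out as the non-transferable core and proves via the trigonometric closed form $W_k(\cos\theta)=\sin((k+\tfrac12)\theta)/\sin\tfrac\theta2$. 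A telescoping of the kind you use in (ii) is not available here, because the second argument $\sum_{k<m}Z_k^{(1)}$ changes with $m$ and does not satisfy the three-term recurrence; so "induction on $m$" is a hope, not a proof, and without it the claimed power $m-1$ is unestablished.

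Two smaller points. First, the de-genericization is stated incorrectly: the entries of $A(G)$ are $0/1$, so "agreement on a Zariski-dense set of adjacency matrices" is meaningless for a fixed graph. The standard fix is to pin down the sign (your computation would give an explicit sign depending only on $m,n$), replace $Q(G)$ by an arbitrary symmetric matrix $B$ in the statement $\det[\,e,\mathcal{Q}e,\ldots]=\varepsilon_{m,n}(\det B)^{m-1}(\det[e,Be,\ldots,B^{n-1}e])^m$ with $\mathcal{Q}:=I_n\otimes Q(P_m)+B\otimes E_{11}$, prove it for $B$ with simple nonzero spectrum, $v_i^{\top}e\neq0$ and $W_i$ invertible, and conclude by polynomial identity in the entries of $B$. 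Second, you implicitly use orthonormal $v_i$ (to discard the change-of-basis determinant $(\det[v_1,\ldots,v_n])^m=\pm1$) and the invertibility of $W_i$ (equivalently $\lambda_i\neq0$); both are fine under the genericity just described but should be said. With step (i) actually proved -- for instance by importing the paper's Chebyshev resultant lemma -- your argument would assemble into a complete and pleasantly structural proof.
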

\begin{remark}\normalfont{
During the writing of this paper, the authors learned that for the special case that $m\in \{2,3\}$,  Theorem \ref{twq} was independently proved by Tian et al. \cite{twcs2023} using a different method.}
\end{remark}
Note that the power of the first factor in the RHS of Eq.~\eqref{wa} is $\lfloor\frac{m}{2}\rfloor$, whereas the corresponding number in Eq.~\eqref{wq} is $m-1$.  It would be desirable to give a general formula to unify Eqs.~\eqref{wa} and \eqref{wq}. The following definition is essentially the $A_\alpha$-matrix introduced by Nikiforov \cite{nikiforov}.  
\begin{definition}
 $A_\tau(G)=A(G)+\tau D(G)$ for $\tau\in \mathbb{R}$. 
\end{definition}
Note that $A_0(G)=A(G)$ and $A_1(G)=Q(G)$. We write 
\begin{equation*}
W_\tau(G)=[e,A_\tau e,\ldots,A_\tau^{n-1}e], 
\end{equation*}
where $A_\tau=A_\tau(G)$. We shall call $W_\tau(G)$ the $A_\tau$-walk matrix of $G$. Clearly, $W_\tau(G)=W_A(G)$ when $\tau=0$; and  $W_\tau(G)=W_Q(G)$ when $\tau=1$. We use $I_n$ to denote the identity matrix of order $n$.
\begin{conjecture}\label{mainconj}
	\begin{equation*}
	\det W_\tau(G\circ P_m)=\pm (\det A_\tau(G))^{\lfloor\frac{m}{2}\rfloor}(\det ((1-\tau^2)I_n+\tau A_\tau(G)))^{\lfloor\frac{m-1}{2}\rfloor}(\det W_\tau(G))^m.
	\end{equation*}
\end{conjecture}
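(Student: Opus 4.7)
The plan is to extend the block-elimination strategy that underlies Theorems \ref{twa} and \ref{twq}, but carrying the scalar parameter $\tau$ throughout. As a sanity check, $\tau=0$ collapses $(1-\tau^2)I_n+\tau A_\tau(G)$ to $I_n$ and recovers Theorem \ref{twa}, while $\tau=1$ gives $(1-\tau^2)I_n+\tau A_\tau(G)=Q(G)$, so that together with $\det A_\tau(G)=\det Q(G)$ the exponents sum to $\lfloor m/2\rfloor+\lfloor(m-1)/2\rfloor=m-1$, recovering Theorem \ref{twq}. Setting $M:=A_\tau(G)$ and layering the vertex set of $G\circ P_m$ as $V_1,V_2,\ldots,V_m$ (with $V_1=V(G)$ and $V_j$ collecting the $j$-th vertex of each copy of $P_m$), one verifies that $A_\tau(G\circ P_m)$ is block tridiagonal with diagonal blocks $M+\tau I_n,2\tau I_n,\ldots,2\tau I_n,\tau I_n$ and identity off-diagonal blocks, so the $j$-th block of $A_\tau^ke$ has the form $p_{j,k}(M)e$ for scalar polynomials determined by $p_{j,0}=1$ and
\begin{equation*}
p_{1,k+1}=(x+\tau)p_{1,k}+p_{2,k},\quad p_{j,k+1}=p_{j-1,k}+2\tau p_{j,k}+p_{j+1,k},\quad p_{m,k+1}=p_{m-1,k}+\tau p_{m,k}.
\end{equation*}

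My next step is to analyse these polynomials via the generating series $V_j(t)=\sum_{k\ge0}p_{j,k}(x)t^k$, which solves $L(t,x)V=\mathbf{1}_m$ for a tridiagonal matrix whose bulk agrees with the Jacobi matrix of the path but whose first and last diagonal entries are corrected to $1-t(x+\tau)$ and $1-\tau t$ respectively. A direct recursion yields
\begin{equation*}
\det L(t,x)=\bigl((1-\tau^2)+\tau x\bigr)u_m+\bigl(\tau(\tau-x)+t(2\tau-x-2\tau^3+2\tau^2x)\bigr)u_{m-1},
\end{equation*}
where $u_k$ satisfies the Chebyshev-type recurrence $u_k=(1-2\tau t)u_{k-1}-t^2u_{k-2}$ with $u_0=1$, $u_1=1-2\tau t$. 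The key observation is that combining the first-row correction $t(\tau-x)$ with the last-row correction $\tau t$ produces exactly the factor $(1-\tau^2)+\tau x$ appearing in the conjecture, which is strong evidence that this is the right framework and indicates where the corresponding factor should materialise in the walk-matrix determinant.

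With the polynomial data in hand, I would use the recurrence to eliminate the deeper layers $V_m,V_{m-1},\ldots,V_2$ in favour of $V_1$ (each $p_{j+1,k}$ being expressible via $p_{j,k+1},p_{j,k},p_{j-1,k}$), placing $W_\tau(G\circ P_m)$ into block lower-triangular form whose $j$-th diagonal block is, up to a column permutation, of the form $f_j(M)W_\tau(G)$ for polynomials $f_j$ arising from the transfer matrix of the Jacobi system. Taking determinants,
\begin{equation*}
\det W_\tau(G\circ P_m)=\pm\Bigl(\prod_{j=1}^{m}\det f_j(M)\Bigr)\cdot(\det W_\tau(G))^m,
\end{equation*}
reducing the conjecture to the purely algebraic identity
\begin{equation*}
\prod_{j=1}^{m}f_j(x)=\pm\,c\cdot x^{\lfloor m/2\rfloor}\bigl((1-\tau^2)+\tau x\bigr)^{\lfloor(m-1)/2\rfloor}
\end{equation*}
for a constant $c$ independent of $x$.

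The principal obstacle will be proving this final identity with its alternating floor exponents: the factor of $x$ should track how often the step $p_{1,k+1}=(x+\tau)p_{1,k}+p_{2,k}$ pulls down a factor of $M$ during elimination, while the factor $(1-\tau^2)+\tau x$ should account for the endpoint coupling at $V_m$ interacting with the bulk Chebyshev recurrence, exactly as exhibited in the explicit form of $\det L(t,x)$ above. If this direct bookkeeping proves too delicate, I would fall back on induction on $m$ in steps of two: after checking the base cases $m\in\{2,3\}$ by direct computation, the inductive step reduces to the cleaner transfer identity
\begin{equation*}
\det W_\tau(G\circ P_{m+2})=\pm\det A_\tau(G)\cdot\det\bigl((1-\tau^2)I_n+\tau A_\tau(G)\bigr)\cdot(\det W_\tau(G))^2\cdot\det W_\tau(G\circ P_m),
\end{equation*}
which by the same block-elimination restricted to the two newly appended layers should be considerably more tractable than the global statement.
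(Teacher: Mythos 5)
This statement is Conjecture~\ref{mainconj}, which the paper itself does not prove: the paper only establishes the unconditional formula $\det W_\tau(G\circ P_m)=\pm(\det W_\tau(G))^m\prod_{i,j}Z^{(\tau)}(\mu_i^{(j)})$ (Proposition~\ref{dwt}) and then shows that the conjecture would follow from the resultant identity~\eqref{conres} for the polynomials $Z_k^{(\tau)}$, an identity verified only at $\tau=0$ and $\tau=1$ via Chebyshev polynomials of the second and fourth kinds. Your proposal arrives at essentially the same frontier by a different route (block elimination on the walk matrix instead of the paper's eigenvector expansion), but it does not cross it. The identity
\begin{equation*}
\prod_{j=1}^{m}f_j(x)=\pm\,c\cdot x^{\lfloor m/2\rfloor}\bigl((1-\tau^2)+\tau x\bigr)^{\lfloor (m-1)/2\rfloor}
\end{equation*}
that you defer as ``the principal obstacle'' is precisely the open content of the conjecture; it is equivalent to the resultant identity~\eqref{conres} with $t=\tau-\lambda_i$, and no argument for it is given beyond the heuristic that the factor $(1-\tau^2)+\tau x$ appears in your expression for $\det L(t,x)$. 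So the proposal is a plausible reduction, not a proof.

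Two further points. First, the fallback induction in steps of two is not a viable escape: the proposed transfer identity relating $\det W_\tau(G\circ P_{m+2})$ to $\det W_\tau(G\circ P_m)$ is itself an unproven statement of comparable difficulty, and there is no evident structural relation between the two walk matrices --- appending two vertices to each pendant path changes the degree (hence the diagonal of $A_\tau$) at the former leaf, which perturbs \emph{all} the layer polynomials $p_{j,k}$, not just the two new ones; note also that even for $\tau\in\{0,1\}$ the known proofs (Theorems~\ref{twa} and~\ref{twq}) go through closed-form resultant computations rather than such an induction. Second, the intermediate claim that elimination puts $W_\tau(G\circ P_m)$ into block lower-triangular form with diagonal blocks of the form $f_j(M)W_\tau(G)$ is asserted rather than derived; it is believable (it parallels the strategy of \cite{wym2023}, and the paper's Proposition~\ref{dwt} delivers an equivalent factorization $\pm(\det W_\tau(G))^m\prod_{i,j}Z^{(\tau)}(\mu_i^{(j)})$ by a cleaner eigenvector argument), but as written it carries no proof, and in any case it only reproduces what the paper already knows. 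To actually resolve the conjecture you would need to prove the floor-exponent identity itself, e.g.\ by computing $\mathrm{Res}\bigl(Z_m^{(\tau)}+tZ_{m-1}^{(\tau)},\sum_{k=0}^{m-1}Z_k^{(\tau)}\bigr)$ for general $\tau$, which neither your proposal nor the paper accomplishes.
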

One easily finds that Conjecture  \ref{mainconj} (if true) unifies Theorems \ref{twa} and \ref{twq}. The rest of this paper is devoted to proving Theorem \ref{twq}. The main strategy is similar to our previous paper \cite{wym2023}, which reduces the desired identity on the walk determinants (Eq.~\eqref{wa}) to some identities on resultants related to the second kind of Chebyshev polynomials. We find that, except for the computation of resultants of Chebyshev polynomials, most of the previous argument on the $A$-walk matrix in \cite{wym2023} can be easily extended to the $Q$-walk matrix or the more general $A_\tau$-walk matrix. 
\section{Eigenvalues and eigenvectors of $A_\tau(G\circ P_m)$}\label{comp_ev}

\begin{figure}\label{C4P3}
	\centering
	\includegraphics[height=4cm]{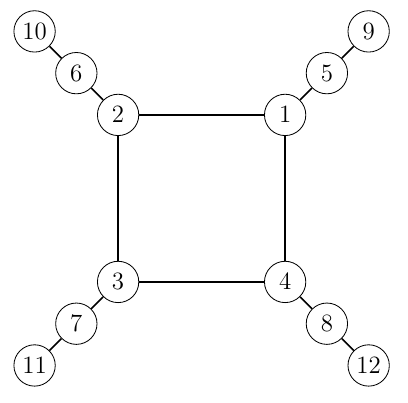}
	\caption{The rooted product $C_4\circ P_3$.}
\end{figure}
\begin{definition}\label{defz}
	$Z_0^{(\tau)}(x)=1$, $Z_1^{(\tau)}(x)=x-\tau$, and $Z_k^{(\tau)}(x)=(x-2\tau)Z_{k-1}^{(\tau)}(x)-Z_{k-2}^{(\tau)}(x)$ for $k\ge 2$.
\end{definition}
Let $B_1^{(\tau)}=\tau$ and \begin{equation*}
B_k^{(\tau)}=A_\tau(P_k)+\diag[\tau,0,0,\ldots,0]=\begin{pmatrix}
2\tau&1&&&\\1&2\tau&1&&\\&\ddots&\ddots&\ddots&\\&&1&2\tau&1\\&&&1&\tau
\end{pmatrix}, \text{~for~} k\ge 2.
\end{equation*}
Expanding $\det (xI_k-B_k^{(\tau)})$ by the first row, one easily finds that 
$$\det (xI_k-B_k^{(\tau)})=(x-2\tau)\det(xI_{k-1}-B_{k-1}^{(\tau)})-\det(xI_{k-2}-B_{k-2}^{(\tau)}),\text{~for~} k\ge 2.$$
This means that $\det (xI_k-B_k^{(\tau)})$ satisfies the same three-term recurrence relations as $Z_k^{(\tau)}(x)$.  Noting that $\det (xI_k-B_k^{(\tau)})=Z_k^{(\tau)}(x)$ for $k\in\{0,1\}$, the two sequences must be equal. Here  we make the convention that $B_0^{(\tau)}$ is a matrix of order 0 and $\det (xI_0-B_0^{(\tau)})=1$.
\begin{lemma}\label{zb}
	$Z_k^{(\tau)}(x)=\det(xI_k-B_k^{(\tau)})$ for all $k\ge 0$.
\end{lemma}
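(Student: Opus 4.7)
My plan is to prove the identity by induction on $k$, matching the two sequences via their common initial values and their common three-term recurrence.

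First, I would dispose of the base cases. For $k=0$, the convention that $B_0^{(\tau)}$ is the empty matrix of order $0$ gives $\det(xI_0 - B_0^{(\tau)}) = 1 = Z_0^{(\tau)}(x)$. For $k=1$, since $B_1^{(\tau)} = \tau$, we have $\det(xI_1 - B_1^{(\tau)}) = x - \tau = Z_1^{(\tau)}(x)$. Both base values agree.

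Next, I would establish the recurrence on the determinant side. Writing down $xI_k - B_k^{(\tau)}$ explicitly as a tridiagonal matrix with diagonal entries $x-2\tau, x-2\tau, \ldots, x-2\tau, x-\tau$ (the final entry being special) and off-diagonal entries $-1$, I would expand $\det(xI_k - B_k^{(\tau)})$ along the first row. The $(1,1)$ cofactor is $\det(xI_{k-1} - B_{k-1}^{(\tau)})$, because deleting the first row and column yields exactly the tridiagonal matrix of order $k-1$ whose special entry $\tau$ sits again in the bottom-right corner. For the $(1,2)$ cofactor, deleting the first row and second column produces a matrix whose first column has only a single nonzero entry $-1$ in the top position; a further expansion along that column then produces $-\det(xI_{k-2} - B_{k-2}^{(\tau)})$. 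Combining signs carefully, these two contributions yield
\begin{equation*}
\det(xI_k - B_k^{(\tau)}) = (x-2\tau)\det(xI_{k-1} - B_{k-1}^{(\tau)}) - \det(xI_{k-2} - B_{k-2}^{(\tau)}),
\end{equation*}
which is precisely the recurrence defining $Z_k^{(\tau)}(x)$ in Definition \ref{defz}.

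Since both sequences satisfy the same initial conditions and the same linear three-term recurrence, induction on $k$ gives the claimed equality for all $k \geq 0$. There is no serious obstacle here; the only point requiring a bit of attention is the careful bookkeeping of signs in the expansion of the $(1,2)$ cofactor, together with the mild convention for the empty case $k=0$. No properties of $\tau$ are needed, so the same argument works uniformly for all real $\tau$.
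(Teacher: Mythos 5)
Your proposal is correct and follows essentially the same route as the paper: expand $\det(xI_k-B_k^{(\tau)})$ along the first row to obtain the recurrence $(x-2\tau)\det(xI_{k-1}-B_{k-1}^{(\tau)})-\det(xI_{k-2}-B_{k-2}^{(\tau)})$, check the cases $k\in\{0,1\}$ (with the empty-matrix convention), and conclude that the two sequences coincide. Nothing further is needed.
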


\begin{definition}\normalfont{
	Let $A=(a_{ij})$ be an $m\times n$ matrix and $B$  a $ p \times  q$ matrix. The \emph{Kronecker product} $A\otimes B$ is the  block matrix:
	$$A\otimes B=\begin{bmatrix}
	a_{11}B&\cdots&a_{1n}B\\
	\vdots&\ddots&\vdots\\
	a_{m1}B&\cdots&a_{mn}B
	\end{bmatrix}.
	$$
}
\end{definition}
By appropriately labeling the vertices in $G\circ P_m$ (see Figure \ref{C4P3} for an illustration), the  matrix $A_\tau(G\circ P_m)$ has a nice structure.
\begin{lemma}\label{adjGP}
	$A_\tau(G\circ P_m)=A_\tau(P_m)\otimes I_n+D_1\otimes A_\tau(G),$
	where $I_n$ is the identity matrix of order $n$ and $D_1$ is the diagonal matrix $\diag(1,0,\ldots,0)$ of order $m$.
	\end{lemma}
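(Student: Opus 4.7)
The plan is to verify the decomposition directly by choosing a convenient labeling of the vertices of $G\circ P_m$. I would label the $mn$ vertices layer by layer: for $k=1,2,\ldots,m$, the block $\{(k-1)n+1,\ldots,kn\}$ consists of the ``$k$-th level'' vertices, i.e.\ the vertex occupying position $k$ along the path (counted from the root) in each of the $n$ copies of $P_m$. In particular, layer $1$ coincides with the vertices of $G$ (identified with the roots), and within any fixed layer the $n$ vertices are indexed consistently with the vertices of $G$. With this labeling, all matrices of $G\circ P_m$ naturally split into $m\times m$ blocks of size $n$, matching the block structure of a Kronecker product whose left factor is $m\times m$ and whose right factor is $n\times n$.

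Once the labeling is fixed, I would handle the adjacency and degree parts of $A_\tau=A+\tau D$ separately. The edges of $G\circ P_m$ fall into two disjoint classes: the $G$-edges, which all lie in layer $1$, and the path edges within each copy of $P_m$. The former contribute exactly $D_1\otimes A(G)$, since $A(G)$ appears in the $(1,1)$-block and all other blocks are zero. The latter contribute $A(P_m)\otimes I_n$, because an edge between layers $i$ and $j$ is present in every copy and connects vertices with the same within-layer index. For the degree matrix, a layer-$1$ vertex $i$ has degree $d_G(i)+1$, an interior layer vertex has degree $2$, and a layer-$m$ vertex has degree $1$ (using $m\ge 2$); splitting the ``$1$'' or ``$2$'' from the path neighbors from the ``$d_G(i)$'' from the $G$-neighbors gives
\[
D(G\circ P_m)=D(P_m)\otimes I_n+D_1\otimes D(G).
\]

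Adding the two decompositions and using bilinearity of the Kronecker product yields
\[
A_\tau(G\circ P_m)=\bigl(A(P_m)+\tau D(P_m)\bigr)\otimes I_n+D_1\otimes\bigl(A(G)+\tau D(G)\bigr)=A_\tau(P_m)\otimes I_n+D_1\otimes A_\tau(G),
\]
which is the asserted identity. There is no substantive obstacle; the only point that needs care is to verify that the path-edge contributions assemble into $A(P_m)\otimes I_n$ rather than $I_n\otimes A(P_m)$, which is precisely the reason for ordering the vertices layer-by-layer rather than copy-by-copy. The exceptional boundary cases (layer $1$ at one end, layer $m$ at the other) are automatically handled by the fact that $D(P_m)$ already records the correct path degrees $1,2,\ldots,2,1$.
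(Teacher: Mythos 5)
Your proposal is correct and follows essentially the same route as the paper: under the layer-by-layer labeling, split $A_\tau=A+\tau D$ into the adjacency and degree contributions, observe that the path edges and path degrees assemble into $A_\tau(P_m)\otimes I_n$ while the $G$-edges and $G$-degrees sit in the $(1,1)$-block giving $D_1\otimes A_\tau(G)$. The paper simply writes out the same block matrices explicitly and regroups them, so there is no substantive difference.
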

\begin{proof}
	We have
	\begin{eqnarray*}
	A_\tau(G\circ P_m)&=&A(G\circ P_m)+\tau D(G\circ P_m)\\
	&=&
	\begin{bmatrix}
		A(G)&I_n&&&\\
		I_n&0&I_n&&\\
		&I_n&\ddots&\ddots&\\
		&&\ddots&0&I_n\\
		&&&I_n&0
	\end{bmatrix}+
\tau\begin{bmatrix}
	D(G)+I_n&&&&\\
	&2I_n&&&\\
	&&\ddots&&\\
	&&&2I_n&\\
	&&&&I_n
\end{bmatrix}\\
&=&\begin{bmatrix}
	\tau I_n&I_n&&&\\
	I_n&2\tau I_n&I_n&&\\
	&I_n&\ddots&\ddots&\\
	&&\ddots&2\tau I_n& I_n\\
	&&&I_n&\tau I_n
\end{bmatrix}+
	\begin{bmatrix}
	A(G)+\tau D(G)&&&&\\
	&0&&&\\
	&&0&&\\
	&&&\ddots&\\
	&&&&0
\end{bmatrix}\\
&=&A_\tau(P_m)\otimes I_n+D_1\otimes A_\tau(G).
	\end{eqnarray*}
This completes the proof.
\end{proof}
For a graph $G$, we use $\phi_\tau(G;x)$ to denote $\det(xI-A_\tau(G))$ and we call it the $A_\tau$- characteristic polynomial of $G$. The roots of $\phi_\tau(G;x)=0$ are called the $A_\tau$-eigenvalues (spectrum) of $G$.
\begin{lemma}\label{chip}
	$\phi_\tau(P_k;x)=Z_k^{(\tau)}(x)+\tau Z_{k-1}^{(\tau)}(x)$ for $k\ge 1$.
	\end{lemma}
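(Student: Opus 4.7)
The plan is to relate $\phi_\tau(P_k;x)$ to $\det(xI_k - B_k^{(\tau)}) = Z_k^{(\tau)}(x)$ (which is Lemma~\ref{zb}) by observing that $A_\tau(P_k)$ and $B_k^{(\tau)}$ differ in only a single entry. Since an endvertex of $P_k$ has degree $1$, the $(1,1)$-entry of $A_\tau(P_k)$ is $\tau$, whereas the $(1,1)$-entry of $B_k^{(\tau)}$ is $2\tau$; all other entries coincide. Equivalently,
\[
B_k^{(\tau)} \;=\; A_\tau(P_k) + \tau\, e_1 e_1^\top,
\]
where $e_1$ denotes the first standard basis vector of $\mathbb{R}^k$.

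The main step is then a one-line application of the multilinearity of the determinant in the first row. Writing the first row of $xI_k - A_\tau(P_k)$ as the sum of the first row of $xI_k - B_k^{(\tau)}$ and the row $(\tau,0,\ldots,0)$ gives
\[
\phi_\tau(P_k;x) \;=\; \det(xI_k - B_k^{(\tau)}) + \det(M),
\]
where $M$ coincides with $xI_k - A_\tau(P_k)$ except that its first row is $(\tau,0,\ldots,0)$. Expanding $\det(M)$ along this first row produces $\tau$ times the $(1,1)$-minor of $xI_k - A_\tau(P_k)$. That minor is precisely $\det(xI_{k-1} - B_{k-1}^{(\tau)})$, because deleting the first row and column of $xI_k - A_\tau(P_k)$ leaves exactly the tridiagonal matrix whose top-left diagonal entry is $2\tau$. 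Applying Lemma~\ref{zb} twice then gives the claimed identity
\[
\phi_\tau(P_k;x) \;=\; Z_k^{(\tau)}(x) + \tau\, Z_{k-1}^{(\tau)}(x).
\]

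There is no substantive obstacle here: once one spots the single-entry correction between $A_\tau(P_k)$ and $B_k^{(\tau)}$, the lemma is essentially bookkeeping. The only minor point to verify is the base case $k=1$, where $B_0^{(\tau)}$ is the empty matrix with determinant $1$ under the convention adopted after Lemma~\ref{zb}; then both sides reduce to $x$, consistent with $A_\tau(P_1)=0$. An alternative (slightly longer) route would be to induct on $k$ by expanding $\det(xI_k - A_\tau(P_k))$ along the first row and identifying the two resulting minors via Lemma~\ref{zb} together with the three-term recurrence for $Z_k^{(\tau)}$, but the multilinearity argument above is the most economical.
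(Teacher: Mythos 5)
Your proof is correct and follows essentially the same route as the paper: both split $\det(xI_k-A_\tau(P_k))$ along its first row (exploiting that $A_\tau(P_k)$ and $B_k^{(\tau)}$ differ only in the $(1,1)$-entry), identify the two pieces as $\det(xI_k-B_k^{(\tau)})$ and $\tau\det(xI_{k-1}-B_{k-1}^{(\tau)})$, and invoke Lemma~\ref{zb}, with the case $k=1$ checked directly. The only trivial slip is the parenthetical claim that the $(1,1)$-minor has top-left entry $2\tau$, which fails for $k=2$ (it is $\tau$ there), but the identification with $xI_{k-1}-B_{k-1}^{(\tau)}$ remains valid since $B_1^{(\tau)}=\tau$.
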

\begin{proof}
	We have $\phi_\tau(P_1;x)=x$ and $Z_1^{(\tau)}(x)+\tau Z_{0}^{(\tau)} (x)=(x-\tau)+\tau=x$. This means that Lemma \ref{chip} holds for $k=1$. Now, we assume $k\ge 2$. Then, we have

\begin{eqnarray}
\phi_\tau(P_k;x)&=&\begin{vmatrix}
x-\tau&-1&&&\\-1&x-2\tau&-1&&\\&\ddots&\ddots&\ddots&\\&&-1&x-2\tau&-1\\&&&-1&x-\tau\end{vmatrix}\nonumber\\
&=&\begin{vmatrix}
x-2\tau&-1&&&\\-1&x-2\tau&-1&&\\&\ddots&\ddots&\ddots&\\&&-1&x-2\tau&-1\\&&&-1&x-\tau
\end{vmatrix}+\begin{vmatrix}
\tau&0&&&\\-1&x-2\tau&-1&&\\&\ddots&\ddots&\ddots&\\&&-1&x-2\tau&-1\\&&&-1&x-\tau
\end{vmatrix}\nonumber\\
&=&\det(xI_k-B_k^{(\tau)})+\tau\det(xI_{k-1}-B_{k-1}^{(\tau)})\nonumber\\
&=&Z_k^{(\tau)}(x)+\tau Z_{k-1}^{(\tau)}(x)\nonumber.
\end{eqnarray}
This completes the proof. 
\end{proof}  
We need the following useful formula on determinants of block matrices; proofs can be found in,  e.g., \cite{b1989,ksw1999,s2000}. 
\begin{lemma}\label{detST} Let $S$ be a commutative subring of $T^{n×n}$, the set of all $n \times n$ matrices over a  commutative ring $T$, 
and let $M \in  S^{m×m}$. Then,
$$\det\nolimits_T M = {\det\nolimits_T}({\det\nolimits_S} M).$$
(the subscript indicates where the determinant is computed).
\end{lemma}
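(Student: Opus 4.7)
The plan is to first reduce to the case when $T$ is an algebraically closed field, then exploit the fact that the entries of $M$ form a commuting family of $n\times n$ matrices so as to triangularize them simultaneously.

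To carry out the reduction, observe that both sides of the identity are polynomial expressions in the scalar entries of the blocks $M_{kl}$, subject only to the pairwise commutativity relations $M_{kl}M_{k'l'}=M_{k'l'}M_{kl}$. A standard universality argument (work in the universal ring generated by symbolic entries modulo these relations, pass to a residue field at a maximal ideal, then to its algebraic closure) reduces verification to the case where $T$ is algebraically closed.

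For the main step, the commutativity of $S$ implies that the $m^2$ matrices $M_{kl}$ admit a simultaneous upper triangularization: there exists $P\in \mathrm{GL}_n(T)$ such that every $P^{-1}M_{kl}P$ is upper triangular. Conjugation of $M$ (viewed as an $mn\times mn$ matrix over $T$) by $I_m\otimes P$ preserves $\det\nolimits_T M$, while conjugation of $\det\nolimits_S M$ by $P$ preserves $\det\nolimits_T(\det\nolimits_S M)$, so I may assume each block $M_{kl}$ is upper triangular with diagonal entries $a^{(k,l)}_{ii}$ for $i=1,\ldots,n$. After permuting the $mn$ indices to group those with the same within-block coordinate $i$, the matrix $M$ becomes block upper triangular with $m\times m$ diagonal blocks $M^{(i)}=(a^{(k,l)}_{ii})_{k,l=1}^{m}$, yielding
\begin{equation*}
\det\nolimits_T M=\prod_{i=1}^{n}\det\nolimits_T M^{(i)}.
\end{equation*}
On the other hand, the permutation expansion gives $\det\nolimits_S M=\sum_\sigma \sgn(\sigma)\prod_k M_{k,\sigma(k)}$, which is itself upper triangular (sums and products of upper triangular matrices are upper triangular), with $i$-th diagonal entry $\sum_\sigma \sgn(\sigma)\prod_k a^{(k,\sigma(k))}_{ii}=\det\nolimits_T M^{(i)}$. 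Hence $\det\nolimits_T(\det\nolimits_S M)=\prod_{i=1}^{n}\det\nolimits_T M^{(i)}$ as well, matching the expression for $\det\nolimits_T M$.

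The hard part is cleanly justifying the reduction to an algebraically closed field in the presence of the commutativity constraints on $S$: the entries of $M$ are not free indeterminates and the polynomial relations they satisfy may be intricate. For a rigorous writeup one would either unpack this universality argument carefully or appeal to one of the published treatments noted in the excerpt, such as \cite{s2000}, which handles the combinatorial bookkeeping directly without passing through a field.
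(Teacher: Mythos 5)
The paper itself offers no proof of this lemma---it is quoted with a pointer to \cite{b1989,ksw1999,s2000}---so your argument has to be measured against those standard treatments. Your main step is correct: over an algebraically closed field a pairwise commuting family of matrices can be simultaneously upper-triangularized, and your regrouping argument correctly gives $\det\nolimits_T M=\prod_{i}\det\nolimits_T M^{(i)}=\det\nolimits_T(\det\nolimits_S M)$. The genuine gap is exactly the step you flag: the ``standard universality argument'' does not exist in the form you invoke. Write $R=\mathbb{Z}[x^{(k,l)}_{ij}]/I$ for the universal ring, $I$ being the ideal generated by the entries of the commutators $[M_{kl},M_{k'l'}]$, and let $D\in R$ be the difference of the two sides of the identity. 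Verifying the identity after every homomorphism from $R$ to an algebraically closed field (equivalently, at all residue fields) only shows that $D$ lies in every prime ideal of $R$, i.e.\ that $D$ is nilpotent; to transfer the identity to an \emph{arbitrary} commutative ring $T$ one needs $D=0$ in $R$, which would require $I$ to be a radical ideal, i.e.\ the scheme of commuting matrix tuples to be reduced. Reducedness of commuting schemes is a delicate question (open in general even for pairs of matrices), so this route, as stated, does not prove the lemma for general $T$.

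What your argument does prove, once the triangularization is cited, is the lemma for every \emph{reduced} ring $T$: a nilpotent $D$ maps to a nilpotent element of $T$, hence to $0$ when $T$ has no nonzero nilpotents. In particular it covers integral domains such as $T=\mathbb{R}[x]$, which is the only case this paper actually uses in computing $\phi_\tau(G\circ P_m;x)$. But the lemma is stated for arbitrary commutative rings, and the cited proofs get this by purely ring-theoretic means valid over any $T$: Silvester \cite{s2000} inducts on $m$, handling non-invertible blocks by passing to $T[t]$ and replacing a block $N$ by $N+tI$, whose determinant is monic in $t$ and hence a non-zerodivisor that can be cancelled; Kovacs, Silver and Williams \cite{ksw1999} and Bourbaki \cite{b1989} likewise work directly over the ring, with no appeal to fields, triangularization, or reducedness. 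So either restrict your statement to reduced $T$ (enough for this paper), or replace the reduction step by one of these direct arguments; as a proof of the lemma as stated, the attempt is incomplete.
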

\begin{lemma}
	$\phi_\tau(G\circ P_m;x)=\prod\limits_{i=1}^n(Z_m^{(\tau)}(x)+(\tau-\lambda_i)Z_{m-1}^{(\tau)}(x)),$
	where $\lambda_1^{(\tau)},\ldots,\lambda_n^{(\tau)}$ are $A_\tau$-eigenvalues of $G$.
\end{lemma}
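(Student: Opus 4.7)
The plan is to reduce the determinant of a block matrix of size $mn$ to a product of $n$ determinants of size $m$, by exploiting the Kronecker structure given by Lemma \ref{adjGP}, and then to evaluate each small determinant by the same row-splitting trick used in the proof of Lemma \ref{chip}.

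First I would write
$$xI_{mn}-A_\tau(G\circ P_m)=(xI_m-A_\tau(P_m))\otimes I_n - D_1\otimes A_\tau(G).$$
Since $A_\tau(G)$ is real symmetric, pick an orthogonal $P$ with $P^{\T}A_\tau(G)P=\Lambda=\diag(\lambda_1^{(\tau)},\ldots,\lambda_n^{(\tau)})$. Conjugating by $I_m\otimes P$ (which has determinant $\pm 1$ and therefore does not change the determinant of the whole matrix) turns the expression into $(xI_m-A_\tau(P_m))\otimes I_n - D_1\otimes \Lambda$. After the block permutation that groups the coordinates by the second tensor index, this matrix becomes block diagonal with blocks $xI_m-A_\tau(P_m)-\lambda_i^{(\tau)}D_1$ for $i=1,\ldots,n$. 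Alternatively one can avoid diagonalization and argue via Lemma \ref{detST}: all blocks lie in the commutative subring generated by $A_\tau(G)$, so the inner determinant yields a matrix polynomial in $A_\tau(G)$ whose outer determinant factors over the spectrum of $A_\tau(G)$. Either route gives
$$\phi_\tau(G\circ P_m;x)=\prod_{i=1}^{n}\det\!\bigl(xI_m-A_\tau(P_m)-\lambda_i^{(\tau)}D_1\bigr).$$

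Next I would compute each factor. The matrix $xI_m-A_\tau(P_m)-\lambda_i^{(\tau)}D_1$ differs from $xI_m-A_\tau(P_m)$ only in the $(1,1)$ entry, which is $x-\tau-\lambda_i^{(\tau)}$ instead of $x-\tau$. Writing $x-\tau-\lambda_i^{(\tau)}=(x-2\tau)+(\tau-\lambda_i^{(\tau)})$ in the first row and splitting the determinant accordingly (exactly as in the proof of Lemma \ref{chip}), the first piece becomes $\det(xI_m-B_m^{(\tau)})$ and the second piece reduces, after expansion along the first row, to $(\tau-\lambda_i^{(\tau)})\det(xI_{m-1}-B_{m-1}^{(\tau)})$. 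Applying Lemma \ref{zb} then converts these into $Z_m^{(\tau)}(x)$ and $Z_{m-1}^{(\tau)}(x)$, giving
$$\det\!\bigl(xI_m-A_\tau(P_m)-\lambda_i^{(\tau)}D_1\bigr)=Z_m^{(\tau)}(x)+(\tau-\lambda_i^{(\tau)})Z_{m-1}^{(\tau)}(x),$$
and multiplying over $i=1,\ldots,n$ yields the claimed formula.

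There is no serious obstacle: the argument is essentially bookkeeping once Lemma \ref{adjGP} is in hand. The only mildly delicate point is justifying the passage from $(xI_m-A_\tau(P_m))\otimes I_n - D_1\otimes A_\tau(G)$ to the product over eigenvalues of $A_\tau(G)$, which I would handle cleanly by diagonalizing $A_\tau(G)$ (valid since it is symmetric) rather than by invoking Lemma \ref{detST}, so as to avoid worrying about the ring-theoretic subtleties. All subsequent manipulations are routine and mirror steps already carried out in Lemmas \ref{zb} and \ref{chip}.
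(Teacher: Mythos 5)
Your argument is correct, and it reaches the product formula by a genuinely different route than the paper. The paper works with the block matrix $xI_{mn}-A_\tau(G\circ P_m)$ directly over the commutative subring $\langle A_\tau,I\rangle$ and applies the block-determinant lemma (Lemma \ref{detST}): the inner determinant is evaluated as the matrix polynomial $(Z_m^{(\tau)}(x)+\tau Z_{m-1}^{(\tau)}(x))I - Z_{m-1}^{(\tau)}(x)A_\tau$, and the outer determinant is then factored over the spectrum of $A_\tau(G)$, which involves pulling out $\left(Z_{m-1}^{(\tau)}(x)\right)^n$ and substituting a rational expression into $\phi_\tau(G;\cdot)$. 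You instead diagonalize $A_\tau(G)$ orthogonally, conjugate by $I_m\otimes P$, and use the permutation similarity of Kronecker products to reduce $(xI_m-A_\tau(P_m))\otimes I_n - D_1\otimes\Lambda$ to the block-diagonal matrix with blocks $xI_m-A_\tau(P_m)-\lambda_i^{(\tau)}D_1$; each $m\times m$ block is then evaluated by the same first-row splitting used in Lemma \ref{chip}, giving $Z_m^{(\tau)}(x)+(\tau-\lambda_i^{(\tau)})Z_{m-1}^{(\tau)}(x)$ directly. Your route buys two things: it avoids the ring-theoretic block-determinant lemma altogether, and it sidesteps the slightly delicate division by $Z_{m-1}^{(\tau)}(x)$ (a rational-function manipulation in the paper that implicitly needs $Z_{m-1}^{(\tau)}\not\equiv 0$), since each eigenvalue contributes its factor without any intermediate quotient. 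The paper's approach, in turn, never needs the spectral theorem and works uniformly at the level of matrix polynomials, which is in the spirit of the rest of the paper; but as a self-contained proof of this lemma your diagonalization argument is clean and complete, including the explicit check that replacing the $(1,1)$ entry $x-\tau$ by $x-2\tau$ produces exactly $\det(xI_m-B_m^{(\tau)})$ and that the cofactor expansion of the second piece yields $(\tau-\lambda_i^{(\tau)})\det(xI_{m-1}-B_{m-1}^{(\tau)})$, so Lemma \ref{zb} applies as you claim.
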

\begin{proof}
Letting $A_\tau=A_\tau(G)$ and noting that 
$$A_\tau(G\circ P_m)=\begin{pmatrix}
A_\tau+\tau I&I&&&\\
I&2\tau I&I&&\\
&\ddots&\ddots&\ddots&\\
&&I&2\tau I&I\\
&&&I&\tau I
\end{pmatrix},
$$
we have, by Lemma \ref{detST},
\begin{eqnarray}
\phi_\tau(G\circ P_m;x)&=&\begin{vmatrix}
(x-\tau)I-A_\tau&-I&&&\\-I&(x-2\tau)I&-I&&\\&\ddots&\ddots&\ddots&\\&&-I&(x-2\tau)I&-I\\&&&-I&(x-\tau)I\end{vmatrix}\nonumber\\
&=&\det\nolimits_{\mathbb{R}[x]}\left(\det\nolimits_{\langle A_\tau,I\rangle} \begin{pmatrix}
(x-\tau)I-A_\tau&-I&&&\\-I&(x-2\tau)I&-I&&\\&\ddots&\ddots&\ddots&\\&&-I&(x-2\tau)I&-I\\&&&-I&(x-\tau)I
\end{pmatrix}\right)\nonumber,
\end{eqnarray}
where $\langle A_\tau, I\rangle$ is the subring of the matrix ring $\mathbb{R}[x]^{n\times n}$ generated by  $\{A_\tau,I\}$  and is  clearly a commutative subring. Let $S=\langle A_\tau,I\rangle$. Then 
\begin{eqnarray}
&&\det\nolimits_{S} \begin{pmatrix}
(x-\tau)I-A_\tau&-I&&&\\-I&(x-2\tau)I&-I&&\\&\ddots&\ddots&\ddots&\\&&-I&(x-2\tau)I&-I\\&&&-I&(x-\tau)I
\end{pmatrix}\nonumber\\
&=&\det\nolimits_{S} \begin{pmatrix}
(x-\tau)I&-I&&&\\-I&(x-2\tau)I&\ddots&\\&\ddots&\ddots&-I\\&&-I&(x-\tau)I
\end{pmatrix}-\det\nolimits_{S} \begin{pmatrix}
A_\tau&0&&\\-I&(x-2\tau)I&\ddots&\\&\ddots&\ddots&-I\\&&-I&(x-\tau)I\end{pmatrix}\nonumber
\end{eqnarray}
\begin{eqnarray}
&=& \begin{vmatrix}
(x-\tau)&-1&&&\\-1&(x-2\tau)&\ddots&\\&\ddots&\ddots&-1\\&&-1&(x-\tau)
\end{vmatrix}\cdot I- \begin{vmatrix}
1&0&&\\-1&(x-2\tau)&\ddots&\\&\ddots&\ddots&-1\\&&-1&(x-\tau)\nonumber
\end{vmatrix}\cdot A_\tau\\
&=&\phi_\tau(P_m;x)\cdot I-(\det(xI_{m-1}-B_{m-1}^{(\tau)}))\cdot A_\tau\nonumber\\
&=&(Z_m^{(\tau)}(x)+\tau Z_{m-1}^{(\tau)} (x))\cdot I- Z_{m-1}^{(\tau)} (x)\cdot A_\tau,\nonumber
\end{eqnarray}
where the last equality uses Lemmas \ref{zb} and \ref{chip}.
It follows that 
\begin{eqnarray}
\phi_\tau(G\circ P_m)&=&\det( (Z_m^{(\tau)}(x)+\tau Z_{m-1}^{(\tau)} (x))\cdot I- Z_{m-1}^{(\tau)} (x)\cdot A_\tau)\nonumber\\
&=& \left(Z_{m-1}^{(\tau)}(x)\right)^n\det \left(\frac{Z_m^{(\tau)}(x)+\tau Z_{m-1}^{(\tau)} (x)}{Z_{m-1}^{(\tau)}(x)}\cdot I-A_\tau\right)\nonumber\\
&=& \left(Z_{m-1}^{(\tau)}(x)\right)^n\phi_\tau\left(G;\frac{Z_m^{(\tau)}(x)+\tau Z_{m-1}^{(\tau)} (x)}{Z_{m-1}^{(\tau)}(x)}\right)\nonumber\\
&=&\left(Z_{m-1}^{(\tau)}(x)\right)^n\prod_{i=1}^{n}\left(\frac{Z_m^{(\tau)}(x)+\tau Z_{m-1}^{(\tau)} (x)}{Z_{m-1}^{(\tau)}(x)}-\lambda_i^{(\tau)}\right)\nonumber\\
&=&\prod_{i=1}^{n}\left(Z_m^{(\tau)}(x)+(\tau-\lambda_i^{(\tau)}) Z_{m-1}^{(\tau)}(x)\right),\nonumber
\end{eqnarray}
completing the proof.
\end{proof}
To simplify the notations, we shall omit the superscript $\tau$ in the following argument. For example, the notation $Z_m^{(\tau)}(x)$ will be simplified as $Z_m(x)$. 
\begin{definition}\label{eigmu}\normalfont{
Let $\lambda_1,\ldots,\lambda_n$ denote the $A_\tau$-eigenvalues of $G$ and $\xi_1,\ldots,\xi_n$ be the corresponding normalized eigenvector. 	We use $\mu_i^{(j)}(j\in\{1,2,\ldots,m\})$ to denote all zeroes of $Z_m(x)+(\tau-\lambda_i)Z_{m-1}(x)$ for  $i\in\{1,2,\ldots,n\}$ and write $$\eta_i^{(j)}=\frac{1}{Z_{m-1}(\mu_i^{(j)})}\begin{bmatrix}
Z_{m-1}(\mu_i^{(j)})\\
Z_{m-2}(\mu_i^{(j)})\\
\vdots\\
Z_0(\mu_i^{(j)})
\end{bmatrix}\otimes \xi_i.$$
}
\end{definition}
It should be pointed out that $Z_{m-1}(\mu_i^{(j)})$ is never zero, see Corollary \ref{pzm} in Sect.~\ref{cdw}. The following lemma is a special case of \cite[Theorem 3.3.4]{szego}. 
\begin{lemma} \cite{szego}\label{distmu}
For any $i\in\{1,2,\ldots,n\}$, the polynomial $Z_m(x)+(\tau-\lambda_i)Z_{m-1}(x)$ has $m$ distinct real zeros.
\end{lemma}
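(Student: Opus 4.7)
The plan is to identify $Z_m(x)+(\tau-\lambda_i)Z_{m-1}(x)$ as the characteristic polynomial of a real symmetric unreduced tridiagonal (Jacobi) matrix, and then to invoke the classical fact that such matrices have simple spectrum.

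The relevant matrix, motivated by the Kronecker-sum decomposition in Lemma \ref{adjGP}, is
\begin{equation*}
M_i:=A_\tau(P_m)+\lambda_i D_1=\begin{pmatrix}
\tau+\lambda_i&1&&&\\
1&2\tau&1&&\\
&\ddots&\ddots&\ddots&\\
&&1&2\tau&1\\
&&&1&\tau
\end{pmatrix};
\end{equation*}
indeed, for any $\lambda_i$-eigenvector $\xi_i$ of $A_\tau(G)$, the subspace $\{v\otimes\xi_i:v\in\mathbb{R}^m\}$ is $A_\tau(G\circ P_m)$-invariant, with $M_i$ representing the restriction. To compute $\det(xI_m-M_i)$ one expands along the first row: the $(1,1)$-minor equals $\det(xI_{m-1}-B_{m-1})=Z_{m-1}(x)$ by Lemma \ref{zb}, while the $(1,2)$-cofactor reduces, after an expansion along its first column, to $-Z_{m-2}(x)$. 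One application of the recurrence in Definition \ref{defz} then rewrites
\begin{equation*}
\det(xI_m-M_i)=(x-\tau-\lambda_i)Z_{m-1}(x)-Z_{m-2}(x)=Z_m(x)+(\tau-\lambda_i)Z_{m-1}(x).
\end{equation*}

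It remains to show that $M_i$ has $m$ distinct eigenvalues. Reality is automatic from symmetry, and simplicity follows from the unreduced structure: since every subdiagonal entry of $M_i$ equals $1\ne 0$, the $(m-1)\times(m-1)$ submatrix of $M_i-\lambda I_m$ obtained by deleting the first row and the last column is upper triangular with $1$'s on its main diagonal for every $\lambda\in\mathbb{R}$, so $\rank(M_i-\lambda I_m)\ge m-1$. Hence every eigenvalue of $M_i$ has geometric (and, by symmetry, algebraic) multiplicity exactly one, and the lemma follows. No step presents a serious obstacle; the only nontrivial ingredient is the textbook simplicity theorem for unreduced Jacobi matrices, which is the special case of \cite[Theorem 3.3.4]{szego} that the statement cites.
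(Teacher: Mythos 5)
Your proof is correct, but it takes a genuinely different route from the paper: the paper gives no argument at all for this lemma, simply citing it as a special case of Szeg\H{o}'s Theorem 3.3.4 on quasi-orthogonal polynomials (linear combinations $p_m+c\,p_{m-1}$ of consecutive orthogonal polynomials have real simple zeros), whereas you produce a self-contained linear-algebra proof. Your identification $\det(xI_m-M_i)=Z_m(x)+(\tau-\lambda_i)Z_{m-1}(x)$ with $M_i=A_\tau(P_m)+\lambda_i D_1$ checks out: the $(1,1)$ minor is $\det(xI_{m-1}-B_{m-1}^{(\tau)})=Z_{m-1}(x)$ by Lemma \ref{zb}, the remaining term contributes $-Z_{m-2}(x)$, and one application of the recurrence in Definition \ref{defz} gives the stated polynomial; the unreduced-Jacobi simplicity argument (the submatrix of $M_i-\lambda I_m$ obtained by deleting the first row and last column is triangular with unit diagonal, so $\rank(M_i-\lambda I_m)\ge m-1$, and symmetry upgrades geometric to algebraic simplicity) is the standard one and is complete. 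Your approach has the added merit of meshing with the paper's structure: $M_i$ is precisely the restriction of $A_\tau(G\circ P_m)$ to the invariant subspace $\{v\otimes\xi_i\colon v\in\mathbb{R}^m\}$, so it explains both the factorization of $\phi_\tau(G\circ P_m;x)$ and the eigenvectors $\eta_i^{(j)}$ of Lemma \ref{eigA}, at the cost of a page of computation that the paper avoids by leaning on the orthogonal-polynomial literature (the same source that supplies Schur's formula, Lemma \ref{sch}).
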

The main result of this section is the following
\begin{lemma}\label{eigA}
	Let $\tilde{A}_\tau$ denote the $A_\tau$-matrix of $G\circ P_m$. Then $\tilde{A}_\tau\eta_i^{(j)}=\mu_i^{(j)}\eta_i^{(j)}$ for $i\in\{1,2,\ldots,n\}$ and $j\in\{1,2,\ldots,m\}$.
\end{lemma}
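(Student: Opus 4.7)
The plan is to reduce the eigenvalue equation to a one-dimensional recurrence check via the Kronecker-product structure uncovered in Lemma \ref{adjGP}. Writing $\mu=\mu_i^{(j)}$ and $v=[Z_{m-1}(\mu),Z_{m-2}(\mu),\ldots,Z_0(\mu)]^\T$, the vector $\eta_i^{(j)}$ is (up to the nonzero scalar $1/Z_{m-1}(\mu)$) equal to $v\otimes\xi_i$. Since $\tilde A_\tau=A_\tau(P_m)\otimes I_n+D_1\otimes A_\tau(G)$, the mixed-product rule $(A\otimes B)(u\otimes w)=(Au)\otimes(Bw)$ together with $A_\tau(G)\xi_i=\lambda_i\xi_i$ turns $\tilde A_\tau(v\otimes\xi_i)=\mu\,v\otimes\xi_i$ into the purely $m$-dimensional identity
$$\bigl(A_\tau(P_m)+\lambda_i D_1\bigr)\,v=\mu\, v.$$

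I would then verify this row by row. For an interior row $2\le k\le m-1$, the left-hand side equals $Z_{m-k+1}(\mu)+2\tau Z_{m-k}(\mu)+Z_{m-k-1}(\mu)$, and this equals $\mu Z_{m-k}(\mu)$ by the three-term recurrence of Definition \ref{defz}. The last row reduces immediately, since $v_{m-1}+\tau v_m=Z_1(\mu)+\tau Z_0(\mu)=(\mu-\tau)+\tau=\mu=\mu v_m$. The first row is the only one where $D_1$ contributes and where the diagonal entry of $A_\tau(P_m)$ is $\tau$ rather than $2\tau$; the required equality there is $(\mu-\tau-\lambda_i)Z_{m-1}(\mu)-Z_{m-2}(\mu)=0$, which I would obtain by combining the recurrence $Z_m(\mu)=(\mu-2\tau)Z_{m-1}(\mu)-Z_{m-2}(\mu)$ with the defining property $Z_m(\mu)+(\tau-\lambda_i)Z_{m-1}(\mu)=0$ of the root $\mu=\mu_i^{(j)}$.

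There is essentially no obstacle of substance: the interior rows are the Chebyshev-type recurrence in disguise, the bottom boundary row is absorbed automatically by the $\tau$ in the $(m,m)$ position of $A_\tau(P_m)$, and the top boundary row is precisely what the polynomial $Z_m(x)+(\tau-\lambda_i)Z_{m-1}(x)$ was designed to make vanish. The only delicate point is keeping the indexing of $v$ consistent with the block ordering of $\tilde A_\tau$ displayed in the proof of Lemma \ref{adjGP}; once this bookkeeping is fixed, the three row calculations above close the argument.
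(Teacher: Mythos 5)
Your proposal is correct and follows essentially the same route as the paper: both use the decomposition $\tilde A_\tau=A_\tau(P_m)\otimes I_n+D_1\otimes A_\tau(G)$ together with the mixed-product rule and $A_\tau(G)\xi_i=\lambda_i\xi_i$ to reduce the claim to a row-by-row check of the vector $(Z_{m-1}(\mu),\ldots,Z_0(\mu))^\T$, with the interior rows handled by the three-term recurrence, the bottom row by $\tau Z_0+Z_1=\mu$, and the top row by the defining relation $Z_m(\mu)+(\tau-\lambda_i)Z_{m-1}(\mu)=0$. Your reformulation as the $m$-dimensional identity $(A_\tau(P_m)+\lambda_i D_1)v=\mu v$ is just a cleaner packaging of the paper's direct tensor computation, and all three row verifications you give are correct.
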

\begin{proof}
We fix $i$ and $j$ and write $z_k=Z_k(\mu_i^{(j)})$ $(k=0,1,\ldots, m-1)$ for simplicity. By Lemma \ref{adjGP} and some basic properties of the Kronecker product, we obtain
	\begin{eqnarray}\label{Aeta}
\tilde{A}_\tau\eta_i^{(j)} &=&\frac{1}{z_{m-1}}(A_\tau(P_m)\otimes I_n+D_1\otimes A_\tau(G))((z_{m-1},z_{m-2},\ldots,z_0)^\T\otimes \xi_i)\nonumber\\
&=&\frac{1}{z_{m-1}}\left(\left(A_\tau(P_m)\begin{bmatrix}
z_{m-1}\\
z_{m-2}\\
\vdots\\
z_0
\end{bmatrix}\right)\otimes \xi_i+\begin{bmatrix}
z_{m-1}\\
0\\
\vdots\\
0
\end{bmatrix}\otimes (\lambda_i \xi_i)\right)\nonumber\\
&= &\frac{1}{z_{m-1}}\left(A_\tau(P_m)\begin{bmatrix}
z_{m-1}\\
z_{m-2}\\
\vdots\\
z_0
\end{bmatrix}+\begin{bmatrix}
\lambda_i z_{m-1}\\
0\\
\vdots\\
0
\end{bmatrix}\right)\otimes \xi_i\nonumber\\
&= &\frac{1}{z_{m-1}}\begin{bmatrix}
z_{m-2}+\tau z_{m-1}+\lambda_i z_{m-1}\\
z_{m-3}+2\tau z_{m-2}+z_{m-1}\\
\vdots\\
z_0+2\tau z_1+z_2\\
\tau z_0+z_1
\end{bmatrix}\otimes \xi_i.
\end{eqnarray}
By Definition \ref{eigmu}, we see that $\lambda_i z_{m-1}=z_m+\tau z_{m-1}$. Noting that, by Definition \ref{defz}, $\tau z_0+z_1=\tau+(\mu_i^{(j)}-\tau)=\mu_i^{(j)}$ and $z_k+2\tau z_{k+1}+z_{k+2}=\mu_i^{(j)}z_{k+1}$ for any $k\ge 0$, we obtain
\begin{equation*}
\begin{bmatrix}
z_{m-2}+\tau z_{m-1}+\lambda_i z_{m-1}\\
z_{m-3}+2\tau z_{m-2}+z_{m-1}\\
\vdots\\
z_0+2\tau z_1+z_2\\
\tau z_0+z_1
\end{bmatrix}=\begin{bmatrix}
z_{m-2}+2\tau z_{m-1}+z_{m}\\
z_{m-3}+2\tau z_{m-2}+z_{m-1}\\
\vdots\\
z_0+2\tau z_1+z_2\\
\mu_{i}^{(j)}
\end{bmatrix}=\mu_i^{(j)}\begin{bmatrix}
z_{m-1}\\
z_{m-2}\\
\vdots\\
z_1\\
z_0
\end{bmatrix}.
\end{equation*}
Now, Eq. \eqref{Aeta} becomes
$$\tilde{A}_\tau\eta_i^{(j)}=\frac{1}{z_{m-1}}\mu_{i}^{(j)}\begin{bmatrix}
z_{m-1}\\
z_{m-2}\\
\vdots\\
z_1\\
z_0
\end{bmatrix}\otimes\xi_i=\mu_i^{(j)}\eta_i^{(j)},$$
completing the proof.
\end{proof}
\begin{remark}\normalfont{
For the special case when $\tau=1$, Lemma \ref{eigA} was obtained in \cite{lhh2017} using a slightly different argument.}
\end{remark}
\section{Computing $\det W_\tau(G\circ P_m)$}\label{cdw}
\begin{definition}
	$Z(x)=\sum_{k=0}^{m-1}Z_k(x)$.
\end{definition}
The main aim of this section is to prove the following
\begin{proposition}\label{dwt}
	\begin{equation*}
	\det W_\tau(G\circ P_m)=\pm \left(\det W_\tau(G)\right)^m\prod_{i=1}^n\prod_{j=1}^m Z(\mu_i^{(j)}).
	\end{equation*}
\end{proposition}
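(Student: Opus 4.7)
The plan is to apply the standard formula for a walk-matrix determinant to $\tilde A_\tau := A_\tau(G\circ P_m)$, using the explicit eigenpairs supplied by Lemma \ref{eigA}. Recall that for any real symmetric $N \times N$ matrix $M$ with orthonormal eigenbasis $\{\phi_k\}$ and eigenvalues $\{\nu_k\}$, one has $\det W_M = \pm\prod_k (\phi_k^\T e)\cdot \prod_{k<l}(\nu_l-\nu_k)$. I apply this to $\tilde A_\tau$ with normalized eigenvectors $\eta_i^{(j)}/\|\eta_i^{(j)}\|$. Using $\tilde e = \mathbf 1_m \otimes e$ together with the Kronecker structure $\eta_i^{(j)} = v_{ij} \otimes \xi_i$ and $\|\xi_i\|=1$, one directly verifies
$$\eta_i^{(j)\T}\tilde e=\frac{Z(\mu_i^{(j)})}{Z_{m-1}(\mu_i^{(j)})}(\xi_i^\T e),\qquad \|\eta_i^{(j)}\|^2=\frac{\sum_{l=0}^{m-1}Z_l(\mu_i^{(j)})^2}{Z_{m-1}(\mu_i^{(j)})^2}.$$
Substitution expresses $\det W_\tau(G\circ P_m)$ as a product of ratios $Z(\mu_i^{(j)})(\xi_i^\T e)/\sqrt{\sum_l Z_l^2(\mu_i^{(j)})}$ together with the full Vandermonde $V_{\mathrm{big}}=\prod_{(i,j)<(i',j')}(\mu_{i'}^{(j')}-\mu_i^{(j)})$.

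The next step is to reorganize this into the claimed form. I split $V_{\mathrm{big}}$ into within-$i$ Vandermondes $V_i=\prod_{j<j'}(\mu_i^{(j')}-\mu_i^{(j)})$ and cross-$i$ pieces, the latter rewritten as $\prod_{i<i'}\prod_{j'}Q_i(\mu_{i'}^{(j')})$, where $Q_i(t)=Z_m(t)+(\tau-\lambda_i)Z_{m-1}(t)$ is monic with roots $\mu_i^{(1)},\ldots,\mu_i^{(m)}$. Since $\mu_{i'}^{(j')}$ is a root of $Q_{i'}$, the identity $Z_m(\mu_{i'}^{(j')})=(\lambda_{i'}-\tau)Z_{m-1}(\mu_{i'}^{(j')})$ yields the clean reduction
$$Q_i(\mu_{i'}^{(j')})=(\lambda_{i'}-\lambda_i)\,Z_{m-1}(\mu_{i'}^{(j')}).$$
This produces the Vandermonde-in-$\lambda$ factor $\prod_{i<i'}(\lambda_{i'}-\lambda_i)^m$, which together with $\prod_i(\xi_i^\T e)^m$ assembles precisely into $(\det W_\tau(G))^m$.

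The crucial technical input for disposing of the within-$i$ Vandermondes is the Wronskian identity
$$Z_{m-1}(t)Z_m'(t)-Z_{m-1}'(t)Z_m(t)=\sum_{l=0}^{m-1}Z_l(t)^2,$$
proved by induction on the three-term recurrence $Z_k=(x-2\tau)Z_{k-1}-Z_{k-2}$ (the inductive step reduces to $W(Z_{k-1},Z_k)-W(Z_{k-2},Z_{k-1})=Z_{k-1}^2$, with base case $W(Z_0,Z_1)=1=Z_0^2$). Combined with $Q_i'(\mu_i^{(j)})=W(Z_{m-1},Z_m)(\mu_i^{(j)})/Z_{m-1}(\mu_i^{(j)})$ (immediate from $Q_i(\mu_i^{(j)})=0$), this gives $V_i^2=\Disc(Q_i)=\pm\prod_j\sum_lZ_l^2(\mu_i^{(j)})/\prod_jZ_{m-1}(\mu_i^{(j)})$. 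A companion identity $\prod_jZ_{m-1}(\mu_i^{(j)})=\pm\Res(Z_{m-1},Z_m)=\pm 1$ (again by induction on the recurrence) removes the residual $Z_{m-1}$ factors, so the sum-of-squares products cancel exactly against the norm denominators. Assembling everything produces the stated formula. The principal obstacle is this Wronskian identity, without which the $\sqrt{\sum_l Z_l^2}$ denominators do not cancel against the Vandermonde factors; once it is in hand, the rest is bookkeeping.
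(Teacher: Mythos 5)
Your argument is correct, but it follows a genuinely different route from the paper's. The paper never normalizes eigenvectors: it applies Mao's formula (Lemma \ref{basicW}) to $G\circ P_m$ with the raw vectors $\eta_i^{(j)}$ (Corollary \ref{dwt3}) and evaluates the denominator $\det[\eta_1^{(1)},\ldots,\eta_n^{(m)}]$ through the Kronecker block structure (Lemma \ref{deteta}), getting $(\det[\xi_1,\ldots,\xi_n])^m\prod_i\prod_{j_1<j_2}(\mu_i^{(j_2)}-\mu_i^{(j_1)})$; the within-$i$ Vandermonde factors then cancel against the identical factors produced in the numerator, so no norms, no discriminant and no Wronskian identity ever appear. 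You instead insist on an orthonormal eigenbasis, which forces you to compute $\|\eta_i^{(j)}\|$ and then to eliminate the within-$i$ Vandermondes by identifying $V_i^2$ with the discriminant of your $Q_i$ and invoking the confluent Christoffel--Darboux identity $Z_{m-1}Z_m'-Z_{m-1}'Z_m=\sum_{l=0}^{m-1}Z_l^2$ together with $\prod_jZ_{m-1}(\mu_i^{(j)})=\pm\Res(Z_m,Z_{m-1})=\pm1$; both identities are true (your induction on the three-term recurrence is the standard proof), the cancellation of the square roots is legitimate because the $\mu_i^{(j)}$ are real and distinct so each $\sum_lZ_l^2(\mu_i^{(j)})>0$, and all stray signs are absorbed into the stated $\pm$. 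Your cross-term reduction $Q_i(\mu_{i'}^{(j')})=(\lambda_{i'}-\lambda_i)Z_{m-1}(\mu_{i'}^{(j')})$ is exactly the paper's Corollary \ref{ppmu}, and your resultant computation of $\prod_jZ_{m-1}(\mu_i^{(j)})$ replaces the paper's Corollary \ref{pzm}, proved there via Schur's Lemma \ref{sch}. What each route buys: yours avoids the block-determinant bookkeeping of Lemma \ref{deteta} but pays for it with the Christoffel--Darboux/discriminant machinery, which the paper's unnormalized formulation renders unnecessary. One point you should make explicit: normalizing each $\eta_i^{(j)}$ is not by itself enough for your opening formula --- you need the family to be pairwise orthogonal, which does hold once the $\xi_i$ are chosen orthonormal, since $\eta_i^{(j)}=v_{ij}\otimes\xi_i$ and, for fixed $i$, the $\mu_i^{(j)}$ are distinct (Lemma \ref{distmu}); this also covers possible coincidences $\mu_{i_1}^{(j_1)}=\mu_{i_2}^{(j_2)}$ with $i_1\neq i_2$.
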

We need a useful result due to Schur, see \cite[{\S6.71}]{szego}.
\begin{lemma}\cite{szego}\label{sch} Let $\{p_k(x)\}$ be a sequence of polynomials satisfying the three-term recurrence formula
	\begin{equation}\label{pk}
	p_k(x) = (a_kx + b_k) p_{k-1}(x)-c_k p_{k-2}(x), 	k = 2, 3, \ldots,
	\end{equation}
	and the intial conditions $p_0(x) = 1, p_1(x) = a_1x + b_1$. Assume that $a_1a_kc_k\neq 0$		for $k > 1$ and let $\{x_{j,k}\colon\,1\le j\le k\}$ be the zeros of $p_{k}(x)$. Then,
	\begin{equation}\label{del}
	\prod_{j=1}^{k}p_{k-1}(x_{j,k})=(-1)^\frac{k(k-1)}{2}\prod_{j=1}^{k}a_j^{k-2j+1}c_j^{j-1},
	\end{equation}
	where $c_1^0=1$ by convention.
		\end{lemma}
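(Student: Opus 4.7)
The plan is to reduce the product $\prod_{j=1}^{k}p_{k-1}(x_{j,k})$ to a resultant $R_k:=\Res(p_k,p_{k-1})$ and then set up a clean induction on $k$ that exploits the three-term recurrence \eqref{pk}. Writing $\ell_k:=\prod_{i=1}^k a_i$ for the leading coefficient of $p_k$ (which follows directly from \eqref{pk} and the initial data), the standard identity $\Res(f,g)=\textup{lead}(f)^{\deg g}\prod_{\alpha:f(\alpha)=0}g(\alpha)$ gives
\begin{equation*}
\prod_{j=1}^{k}p_{k-1}(x_{j,k})=\frac{R_k}{\ell_k^{k-1}}.
\end{equation*}
So the task becomes proving a closed form for $R_k$ in terms of the $a_i$'s and $c_i$'s, after which dividing by $\ell_k^{k-1}$ should redistribute the exponents of the $a_j$'s to the desired $k-2j+1$.

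Next I would read the recurrence \eqref{pk} as Euclidean division of $p_k$ by $p_{k-1}$ with remainder $-c_k p_{k-2}$, whose degree is $k-2$. Applying the standard reduction rule $\Res(f,g)=\textup{lead}(g)^{\deg f-\deg r}\Res(r,g)$ together with $\Res(\lambda q,p)=\lambda^{\deg p}\Res(q,p)$ and the swap $\Res(p_{k-2},p_{k-1})=(-1)^{(k-2)(k-1)}\Res(p_{k-1},p_{k-2})=R_{k-1}$ (the sign is $+1$ since $(k-2)(k-1)$ is even), I would derive the one-line recursion
\begin{equation*}
R_k=\ell_{k-1}^{2}\,(-c_k)^{k-1}\,R_{k-1},\qquad R_1=\Res(a_1x+b_1,1)=1.
\end{equation*}

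Iterating this from $i=2$ to $k$ gives $R_k=\prod_{i=2}^{k}\ell_{i-1}^{2}(-c_i)^{i-1}$. The sign contribution is $(-1)^{\sum_{i=2}^{k}(i-1)}=(-1)^{k(k-1)/2}$; the exponent of $c_i$ is obviously $i-1$ (with the convention $c_1^0=1$ absorbing the vacuous $i=1$ term); and a single counting step shows the exponent of $a_j$ in $\prod_{i=2}^k\ell_{i-1}^{2}$ equals $2(k-j)$ for $j\le k-1$ (since $a_j$ appears in $\ell_{i-1}^2$ exactly when $i\ge j+1$) and $0$ for $j=k$. Dividing by $\ell_k^{k-1}$, i.e.\ subtracting $k-1$ from every $a_j$-exponent, converts $2(k-j)-(k-1)=k-2j+1$ for $j<k$ and $0-(k-1)=k-2j+1$ for $j=k$, matching \eqref{del} on the nose.

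The only real obstacle is bookkeeping: making sure the signs and the exponent reshuffling in Step~3 line up, and in particular that the resultant-swap sign $(-1)^{(k-2)(k-1)}$ is always $+1$ so that the only sign feeding the induction is $(-1)^{k-1}$ per step. Once that is in place, everything else is a routine telescoping computation and no analytic input beyond the hypothesis $a_1a_kc_k\neq 0$ (which guarantees that all degrees are as expected and that no resultant trivially vanishes) is required.
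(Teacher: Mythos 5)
Your proposal is correct, but there is nothing in the paper to compare it against: the paper does not prove Lemma \ref{sch} at all, it quotes the identity as Schur's result with a pointer to Szeg\H{o}'s book [\S 6.71] and uses it as a black box. So what you have produced is a genuine, self-contained derivation of a cited result. Your route --- writing $\prod_{j}p_{k-1}(x_{j,k})=\Res(p_k,p_{k-1})/\ell_k^{\,k-1}$, reading the three-term recurrence as Euclidean division so that $\Res(p_k,p_{k-1})=\ell_{k-1}^{2}(-c_k)^{k-1}\Res(p_{k-1},p_{k-2})$, and telescoping from $R_1=1$ --- is sound, and the bookkeeping checks out: the exponent of $a_j$ is $2(k-j)$ in $\prod_{i=2}^k\ell_{i-1}^2$ and becomes $2(k-j)-(k-1)=k-2j+1$ after dividing by $\ell_k^{\,k-1}$ (also for $j=k$, where $0-(k-1)=k-2j+1$), the sign is $(-1)^{\sum_{i=2}^k(i-1)}=(-1)^{k(k-1)/2}$, and the exponent of $c_j$ is $j-1$. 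One point you should make explicit to be airtight: the reduction rule as you state it, $\Res(f,g)=\mathrm{lead}(g)^{\deg f-\deg r}\Res(r,g)$, is in general only valid up to the sign $(-1)^{\deg g\,(\deg f+\deg r)}$; the sign-free version is $\Res(g,f)=\mathrm{lead}(g)^{\deg f-\deg r}\Res(g,r)$, with the divisor in the first slot. In your application this extra sign is $(-1)^{(k-1)(2k-2)}=+1$, so nothing breaks, but it deserves the same explicit parity check you gave for the swap sign $(-1)^{(k-2)(k-1)}$. Finally, note that the hypothesis $a_1a_kc_k\neq 0$ is used exactly where you said: it forces $\deg p_{k}=k$ with leading coefficient $\ell_k=\prod_{i\le k}a_i\neq 0$ and $\deg(-c_kp_{k-2})=k-2$, so the exponent $\deg f-\deg r=2$ in the reduction step is correct.
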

\begin{corollary}\label{pzm}
	For any $i\in \{1,2,\ldots,n\}$,
	\begin{equation}\label{delZ}
\prod_{j=1}^{m}Z_{m-1}(\mu_i^{(j)})=(-1)^\frac{m(m-1)}{2}.
	\end{equation}
\end{corollary}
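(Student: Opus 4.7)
The plan is to apply Lemma \ref{sch} to a polynomial sequence whose $m$-th member is exactly $Z_m(x)+(\tau-\lambda_i)Z_{m-1}(x)$, so that the zeros $x_{j,m}$ become the $\mu_i^{(j)}$'s, and whose $(m-1)$-st member is $Z_{m-1}(x)$. The natural choice is to set $p_k(x):=Z_k(x)$ for $k=0,1,\ldots,m-1$ and $p_m(x):=Z_m(x)+(\tau-\lambda_i)Z_{m-1}(x)$.

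The next step is to check that this sequence satisfies the hypothesis of Lemma \ref{sch}. From Definition \ref{defz} we have $p_0=1$ and $p_1=x-\tau$, so $a_1=1$ and $b_1=-\tau$. For $2\le k\le m-1$, Definition \ref{defz} gives $a_k=1$, $b_k=-2\tau$, $c_k=1$. The only step requiring care is $k=m$: using $Z_m=(x-2\tau)Z_{m-1}-Z_{m-2}$ from Definition \ref{defz}, I would rewrite
\begin{equation*}
p_m(x)=Z_m(x)+(\tau-\lambda_i)Z_{m-1}(x)=(x-\tau-\lambda_i)Z_{m-1}(x)-Z_{m-2}(x),
\end{equation*}
which shows that $p_m$ fits the recurrence \eqref{pk} with $a_m=1$, $b_m=-\tau-\lambda_i$, and $c_m=1$. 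Since all $a_k$ and all $c_k$ equal $1$, the nondegeneracy condition $a_1a_kc_k\neq 0$ is trivially satisfied.

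Finally, plugging these coefficients into formula \eqref{del} gives
\begin{equation*}
\prod_{j=1}^{m}Z_{m-1}(\mu_i^{(j)})=\prod_{j=1}^{m}p_{m-1}(x_{j,m})=(-1)^{m(m-1)/2}\prod_{j=1}^{m}1^{m-2j+1}\cdot 1^{j-1}=(-1)^{m(m-1)/2},
\end{equation*}
which is the claimed identity. I do not expect a real obstacle here: the entire argument is a direct application of Schur's identity, and the only mildly nontrivial point is recognizing that the shift $(\tau-\lambda_i)Z_{m-1}$ can be absorbed into the $b_m$ coefficient so that the top of the sequence still fits the three-term recurrence template required by Lemma \ref{sch}.
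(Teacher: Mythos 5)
Your proposal is correct and follows essentially the same route as the paper: both define $p_k=Z_k$ for $k\le m-1$ and $p_m=Z_m+(\tau-\lambda_i)Z_{m-1}$, verify the Schur hypotheses with $a_k=c_k=1$, and read off Eq.~\eqref{delZ} from Eq.~\eqref{del}. Your explicit rewriting $p_m=(x-\tau-\lambda_i)Z_{m-1}-Z_{m-2}$ just spells out the absorption into $b_m$ that the paper leaves implicit.
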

\begin{proof}
	Define
	\begin{equation*}
	p_k(x)=\begin{cases}
	Z_k(x)&k=0,1,\ldots,m-1,\\
	Z_m(x)+(\tau-\lambda_i)Z_{m-1}(x)&k=m.
	\end{cases}
	\end{equation*}
	Then $p_k(x)$ clearly satisfies Eq.~\eqref{pk} with $a_1=1$ and $a_k=c_k=1$ for $k=2,\ldots,m$. We do not need to write the exact values of $b_k$'s since they do not appear in Eq.~\eqref{del}. Noting that $\{\mu_i^{(j)}\colon\, 1\le j\le m\}$ are the zeros of $p_m(x)$, Eq.~\eqref{delZ} clearly follows from Eq.~\eqref{del} for $k=m$. 
\end{proof}
\begin{corollary}\label{ppmu}
	\begin{equation*}
	\prod_{j_2=1}^{m}\prod_{j_1=1}^{m}\left(\mu_{i_2}^{(j_2)}-\mu_{i_1}^{(j_1)}\right)=(-1)^{\frac{m(m-1)}{2}}(\lambda_{i_2}-\lambda_{i_1})^m.
	\end{equation*}
\end{corollary}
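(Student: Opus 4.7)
The plan is to interpret the double product as a resultant of the two monic degree-$m$ polynomials
\[
p_{i_1}(x)=Z_m(x)+(\tau-\lambda_{i_1})Z_{m-1}(x),\qquad p_{i_2}(x)=Z_m(x)+(\tau-\lambda_{i_2})Z_{m-1}(x),
\]
whose roots are precisely the $\mu_{i_1}^{(j_1)}$ and $\mu_{i_2}^{(j_2)}$ by Definition \ref{eigmu}, and then use Corollary \ref{pzm} to evaluate the leftover factor.

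First I would fix $i_1,i_2$ and, exploiting the monicity of $p_{i_1}$, rewrite the inner product as a polynomial evaluation:
\[
\prod_{j_1=1}^{m}\bigl(\mu_{i_2}^{(j_2)}-\mu_{i_1}^{(j_1)}\bigr)=p_{i_1}\!\bigl(\mu_{i_2}^{(j_2)}\bigr).
\]
Next, I would use the defining equation $p_{i_2}(\mu_{i_2}^{(j_2)})=0$ to eliminate $Z_m(\mu_{i_2}^{(j_2)})$; namely
\[
Z_m\!\bigl(\mu_{i_2}^{(j_2)}\bigr)=(\lambda_{i_2}-\tau)\,Z_{m-1}\!\bigl(\mu_{i_2}^{(j_2)}\bigr),
\]
which, after substituting into $p_{i_1}(\mu_{i_2}^{(j_2)})=Z_m(\mu_{i_2}^{(j_2)})+(\tau-\lambda_{i_1})Z_{m-1}(\mu_{i_2}^{(j_2)})$, collapses the expression to the clean form
\[
p_{i_1}\!\bigl(\mu_{i_2}^{(j_2)}\bigr)=(\lambda_{i_2}-\lambda_{i_1})\,Z_{m-1}\!\bigl(\mu_{i_2}^{(j_2)}\bigr).
\]

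Finally, I would take the product over $j_2\in\{1,\dots,m\}$, which yields
\[
\prod_{j_2=1}^{m}\prod_{j_1=1}^{m}\bigl(\mu_{i_2}^{(j_2)}-\mu_{i_1}^{(j_1)}\bigr)=(\lambda_{i_2}-\lambda_{i_1})^m\prod_{j_2=1}^{m}Z_{m-1}\!\bigl(\mu_{i_2}^{(j_2)}\bigr),
\]
and then apply Corollary \ref{pzm} with $i=i_2$ to replace the last product by $(-1)^{m(m-1)/2}$. This delivers exactly the claimed identity.

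There is essentially no obstacle: the only step requiring care is the algebraic reduction via $p_{i_2}(\mu_{i_2}^{(j_2)})=0$, which is a one-line computation. The whole argument is a short consequence of Definition \ref{eigmu} together with the already-established evaluation in Corollary \ref{pzm}, so the proof will be quite compact.
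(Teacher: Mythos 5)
Your proposal is correct and follows essentially the same route as the paper: factor $Z_m(x)+(\tau-\lambda_{i_1})Z_{m-1}(x)=\prod_{j_1}(x-\mu_{i_1}^{(j_1)})$ by monicity, evaluate at $\mu_{i_2}^{(j_2)}$, use $Z_m(\mu_{i_2}^{(j_2)})=(\lambda_{i_2}-\tau)Z_{m-1}(\mu_{i_2}^{(j_2)})$ to reduce to $(\lambda_{i_2}-\lambda_{i_1})Z_{m-1}(\mu_{i_2}^{(j_2)})$, and finish with Corollary \ref{pzm}. No gaps; this matches the paper's argument step for step.
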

\begin{proof}
Note that $Z_m(x)+(\tau-\lambda_i)Z_{m-1}(x)$ is monic and has roots $\mu_i^{(1)},\ldots,\mu_i^{(m)}$, we have
$Z_m(x)+(\tau-\lambda_i)Z_{m-1}(x)=\prod_{j_1=1}^m(x-\mu_i^{(j_1)}).$ Thus,
	\begin{equation*}
Z_m(\mu_{i_2}^{(j_2)})+(\tau-\lambda_{i_1})Z_{m-1}(\mu_{i_2}^{(j_2)})=\prod_{j_1=1}^{m}\left(\mu_{i_2}^{(j_2)}-\mu_{i_1}^{(j_1)}\right).
\end{equation*}
This, together with the fact that $Z_m(\mu_{i_2}^{(j_2)})+(\tau-\lambda_{i_2})Z_{m-1}(\mu_{i_2}^{(j_2)})=0$ implies
\begin{eqnarray}
	\prod_{j_2=1}^{m}\prod_{j_1=1}^{m}\left(\mu_{i_2}^{(j_2)}-\mu_{i_1}^{(j_1)}\right)&=&\prod_{j_2=1}^{m}\left(Z_m(\mu_{i_2}^{(j_2)})+(\tau-\lambda_{i_1})Z_{m-1}(\mu_{i_2}^{(j_2)})\right)\nonumber\\
&=&\prod_{j_2=1}^{m}\left(-(\tau-\lambda_{i_2})Z_{m-1}(\mu_{i_2}^{(j_2)})+(\tau-\lambda_{i_1})Z_{m-1}(\mu_{i_2}^{(j_2)})\right)\nonumber\\
&=&(\lambda_{i_2}-\lambda_{i_1})^m\prod_{j_2=1}^{m}Z_{m-1}(\mu_{i_2}^{(j_2)})\nonumber\\
&=&(-1)^{\frac{m(m-1)}{2}}(\lambda_{i_2}-\lambda_{i_1})^m\nonumber,
\end{eqnarray}
where we use Corollary \ref{pzm} in the last equality. This completes the proof.
\end{proof}
In \cite{mao2015}, Mao et al. obtained an explicit formula to compute $\det W_A(G)$   using the adjacency eigenvalues and eigenvectors of $G$. The method can be naturally extended  to compute $\det W_\tau(G)$ for any $\tau\in \mathbb{R}$.
\begin{lemma}[\cite{mao2015}]\label{basicW}
	Let $\lambda_i$ be the $A_\tau$-eigenvalues of $G$ with  eigenvector $\xi_i$ for $i=1,2,\ldots,n$. Then
	$$\det W_\tau(G)= \frac{\prod_{1\le i_1< i_2\le n}(\lambda_{i_2}-\lambda_{i_1})\prod_{1\le i\le n}(e_n^\T \xi_i)}{\det[\xi_1,\xi_2,\ldots,\xi_n]}.$$
\end{lemma}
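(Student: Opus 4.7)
The plan is to apply the spectral decomposition of the symmetric matrix $A_\tau(G)$ to factor the walk matrix $W_\tau(G)$ as a product of three matrices whose determinants are easy to compute, and then conclude via the Vandermonde determinant.

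Since $A_\tau(G)=A(G)+\tau D(G)$ is symmetric, I may take $\xi_1,\ldots,\xi_n$ to be an orthonormal eigenbasis of $\mathbb{R}^n$. Set $U=[\xi_1,\ldots,\xi_n]$ and $\Lambda=\diag(\lambda_1,\ldots,\lambda_n)$; then $U$ is orthogonal and $A_\tau(G)=U\Lambda U^\T$. Expanding the all-ones vector in this basis gives $e_n=Uc$ with
$$c=U^\T e_n=(e_n^\T\xi_1,\ldots,e_n^\T\xi_n)^\T,$$
so $A_\tau^k e_n=U\Lambda^k c$ for every $k\ge 0$. Consequently,
$$W_\tau(G)=[e_n,A_\tau e_n,\ldots,A_\tau^{n-1}e_n]=U\cdot\diag(e_n^\T\xi_1,\ldots,e_n^\T\xi_n)\cdot V,$$
where $V=(\lambda_i^{j-1})_{1\le i,j\le n}$ is a Vandermonde matrix in the $\lambda_i$'s. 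Taking determinants and applying the Vandermonde identity yields
$$\det W_\tau(G)=\det U\cdot\prod_{i=1}^n(e_n^\T\xi_i)\cdot\prod_{1\le i_1<i_2\le n}(\lambda_{i_2}-\lambda_{i_1}).$$
Because $U$ is orthogonal, $\det U=\pm 1$, and hence $\det U=1/\det U=1/\det[\xi_1,\ldots,\xi_n]$, which produces exactly the formula displayed in the lemma.

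This is essentially standard linear algebra, so there is no substantive obstacle. The only subtlety worth noting is the case of repeated eigenvalues: the Vandermonde product then vanishes, and on the other side $e_n$ lies in the invariant subspace spanned by the eigendirections appearing in its expansion, whose dimension is at most the number of distinct $\lambda_i$'s that occur, so the columns $\{A_\tau^k e_n\}_{k=0}^{n-1}$ are linearly dependent and $\det W_\tau(G)=0$ as well. It is also worth remarking that the right-hand side is invariant under individual rescalings $\xi_i\mapsto\alpha_i\xi_i$ (the factors $\prod(e_n^\T\xi_i)$ and $\det[\xi_1,\ldots,\xi_n]$ both scale by $\prod\alpha_i$), so restricting to an orthonormal eigenbasis entails no loss of generality.
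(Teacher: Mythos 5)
Your proof is correct. Note that the paper gives no proof of this lemma at all: it simply cites Mao--Liu--Wang \cite{mao2015}, remarking that their method for $W_A(G)$ ``can be naturally extended'' to $W_\tau(G)$. Your factorization $W_\tau(G)=U\cdot\diag(e_n^\T\xi_1,\ldots,e_n^\T\xi_n)\cdot V$ with $V=(\lambda_i^{j-1})$ Vandermonde is precisely that standard spectral-decomposition argument, carried out for general $\tau$, so you have supplied exactly the proof the paper delegates to the reference; your added remarks on repeated eigenvalues and on scaling invariance (which justifies dropping the normalization assumption) are correct and make the statement self-contained.
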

Let $\Omega=\{(i,j)\colon\,1\le i\le n \text{~and~} 1\le j\le m\}$ with the co-lexicographical order: $(i_1,j_1)<(i_2,j_2) $ if either $j_1<j_2$, or $j_1=j_2$ and $i_1<i_2$. The following formula of $\det W_\tau(G\circ P_m)$ is an immediate consequence of Lemmas \ref{basicW} and Lemma \ref{eigA}.
\begin{corollary}\label{dwt3}
\begin{equation*}
	\det W_\tau(G\circ P_m)= \frac{\prod_{(i_1,j_1)<(i_2,j_2)}(\mu_{i_2}^{(j_2)}-\mu_{i_1}^{(j_1)})\prod_{(i,j)\in \Omega}(e_{mn}^\T \eta_i^{(j)})}{\det[\eta_1^{(1)},\ldots,\eta_n^{(1)};\ldots;\eta_1^{(m)},\ldots,\eta_n^{(m)}]}.
\end{equation*}
\end{corollary}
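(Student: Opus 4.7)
The plan is to apply Lemma \ref{basicW} to the matrix $\tilde{A}_\tau = A_\tau(G\circ P_m)$, using the eigenpairs produced by Lemma \ref{eigA} together with the co-lexicographical ordering of the index set $\Omega$. By the characteristic polynomial formula proved earlier in this section, $\phi_\tau(G\circ P_m;x) = \prod_{i=1}^n (Z_m(x)+(\tau-\lambda_i)Z_{m-1}(x))$, so the multiset $\{\mu_i^{(j)}\colon (i,j)\in\Omega\}$ is precisely the list of $A_\tau$-eigenvalues of $G\circ P_m$ counted with multiplicity; Lemma \ref{eigA} attaches $\eta_i^{(j)}$ as a corresponding eigenvector. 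Since $\tilde{A}_\tau$ is real symmetric and hence diagonalisable, what remains is to verify that $\{\eta_i^{(j)}\}_{(i,j)\in\Omega}$ is in fact a basis of $\mathbb{R}^{mn}$.

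Granting the basis property, the proof is a direct invocation of Lemma \ref{basicW}: relabel the $mn$ eigenpairs $(\mu_i^{(j)},\eta_i^{(j)})$ using the co-lex ordering on $\Omega$ and substitute into the formula of Lemma \ref{basicW}, with $n$ replaced by $mn$ and $e_n$ replaced by $e_{mn}$. The Vandermonde factor in the numerator becomes $\prod_{(i_1,j_1)<(i_2,j_2)}(\mu_{i_2}^{(j_2)}-\mu_{i_1}^{(j_1)})$, the product of eigenvector-sums becomes $\prod_{(i,j)\in\Omega}(e_{mn}^\T\eta_i^{(j)})$, and the denominator is exactly the block determinant $\det[\eta_1^{(1)},\ldots,\eta_n^{(1)};\ldots;\eta_1^{(m)},\ldots,\eta_n^{(m)}]$ displayed in the statement.

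The only substantive step is thus the linear independence of the family $\{\eta_i^{(j)}\}$. For each fixed $i$, the $m$ numbers $\mu_i^{(1)},\ldots,\mu_i^{(m)}$ are distinct by Lemma \ref{distmu}; since $\{Z_0,\ldots,Z_{m-1}\}$ is a basis of the space of polynomials of degree less than $m$, the evaluation vectors $(Z_{m-1}(\mu_i^{(j)}),\ldots,Z_0(\mu_i^{(j)}))^\T$ for $j=1,\ldots,m$ are linearly independent in $\mathbb{R}^m$. Combined with a fixed orthonormal eigenbasis $\xi_1,\ldots,\xi_n$ of $A_\tau(G)$ and the Kronecker-product shape of each $\eta_i^{(j)}$, a short block argument delivers independence whenever the $\lambda_i$'s are distinct; the remaining case can be reached by a routine continuity/perturbation argument in the entries of $A_\tau(G)$. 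I expect this independence check, together with careful bookkeeping to match the co-lex indexing to Lemma \ref{basicW}, to be the only mild technicality — after which the identity is immediate.
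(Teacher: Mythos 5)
Your proposal is, in substance, the paper's own argument: the paper presents Corollary \ref{dwt3} as an ``immediate consequence'' of Lemmas \ref{basicW} and \ref{eigA}, i.e.\ exactly your substitution of the co-lexicographically ordered eigenpairs $(\mu_i^{(j)},\eta_i^{(j)})$ of $A_\tau(G\circ P_m)$ into Lemma \ref{basicW}, with the factorization $\phi_\tau(G\circ P_m;x)=\prod_{i=1}^n\bigl(Z_m(x)+(\tau-\lambda_i)Z_{m-1}(x)\bigr)$ guaranteeing that these are all $mn$ eigenvalues counted with multiplicity. Your additional concern that the family $\{\eta_i^{(j)}\}$ must be a basis (so the denominator is nonzero) is legitimate; the paper does not address it at this point and only justifies it a posteriori through Lemma \ref{deteta}, whose right-hand side is nonzero by Lemma \ref{distmu} together with the independence of the $\xi_i$.

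However, the way you propose to close that gap has a flaw. Splitting into the case of distinct $\lambda_i$'s and handling the degenerate case ``by a continuity/perturbation argument'' cannot work as stated: linear independence is an open condition, not a closed one, so it does not pass to limits (the vectors $(1,0)^\T$ and $(1,\epsilon)^\T$ are independent for every $\epsilon\neq 0$ but not at $\epsilon=0$); moreover a perturbed symmetric matrix is in general not $A_\tau$ of any graph, and eigenvectors need not vary continuously at eigenvalue collisions. Fortunately no case distinction is needed, because your ``short block argument'' works verbatim without assuming the $\lambda_i$ distinct: if $\sum_{i,j}c_{ij}\eta_i^{(j)}=0$, set $w_i=\sum_j c_{ij}\,Z_{m-1}(\mu_i^{(j)})^{-1}\bigl(Z_{m-1}(\mu_i^{(j)}),\ldots,Z_0(\mu_i^{(j)})\bigr)^\T\in\mathbb{R}^m$, so that $\sum_i w_i\otimes\xi_i=0$; reading this equation blockwise, each of the $m$ blocks is a vanishing linear combination of the linearly independent vectors $\xi_1,\ldots,\xi_n$, forcing every $w_i=0$, and then your Vandermonde-type argument within each fixed $i$ (valid by Lemma \ref{distmu}) forces all $c_{ij}=0$. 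With that repair---which uses only the independence of the $\xi_i$, never the distinctness of the $\lambda_i$---your proof is complete and coincides with the paper's.
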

\begin{lemma}\label{exm}
	$\prod_{(i,j)\in \Omega}(e_{mn}^\T \eta_i^{(j)})=(-1)^\frac{m(m-1)n}{2} \left(\prod_{(i,j)\in \Omega}Z(\mu_i^{(j)})\right)\left(\prod_{1\le i\le n}e_n^\T \xi_i\right)^m.$
\end{lemma}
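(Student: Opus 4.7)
The plan is to evaluate each factor $e_{mn}^\T\eta_i^{(j)}$ in closed form using the Kronecker product structure from Definition \ref{eigmu}, then to multiply these closed forms together and reduce the remaining product of $Z_{m-1}$-values via Corollary \ref{pzm}.

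First I would observe that $e_{mn}=e_m\otimes e_n$ and invoke the mixed-product identity $(a\otimes b)^\T(c\otimes d)=(a^\T c)(b^\T d)$. Applied to
$$\eta_i^{(j)}=\frac{1}{Z_{m-1}(\mu_i^{(j)})}\bigl(Z_{m-1}(\mu_i^{(j)}),Z_{m-2}(\mu_i^{(j)}),\ldots,Z_0(\mu_i^{(j)})\bigr)^\T\otimes\xi_i,$$
this yields
$$e_{mn}^\T\eta_i^{(j)}=\frac{1}{Z_{m-1}(\mu_i^{(j)})}\Bigl(\sum_{k=0}^{m-1}Z_k(\mu_i^{(j)})\Bigr)\bigl(e_n^\T\xi_i\bigr)=\frac{Z(\mu_i^{(j)})}{Z_{m-1}(\mu_i^{(j)})}\,e_n^\T\xi_i,$$
using the Definition of $Z(x)=\sum_{k=0}^{m-1}Z_k(x)$. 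Note that dividing by $Z_{m-1}(\mu_i^{(j)})$ is legitimate by Corollary \ref{pzm}, which guarantees the factor is nonzero.

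Next I would take the product over $(i,j)\in\Omega$. Separating the three factors gives
$$\prod_{(i,j)\in\Omega}e_{mn}^\T\eta_i^{(j)}=\frac{\prod_{(i,j)\in\Omega}Z(\mu_i^{(j)})}{\prod_{i=1}^n\prod_{j=1}^m Z_{m-1}(\mu_i^{(j)})}\cdot\prod_{i=1}^n\bigl(e_n^\T\xi_i\bigr)^m,$$
since for each fixed $i$ the factor $e_n^\T\xi_i$ appears exactly $m$ times as $j$ ranges over $\{1,\ldots,m\}$. Applying Corollary \ref{pzm} to the inner product in the denominator gives $\prod_{j=1}^m Z_{m-1}(\mu_i^{(j)})=(-1)^{m(m-1)/2}$ for every $i$, hence the full denominator equals $(-1)^{m(m-1)n/2}$. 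Using $(-1)^{-k}=(-1)^k$, the claimed identity follows.

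There is essentially no obstacle here: once the Kronecker decomposition of $\eta_i^{(j)}$ is unpacked, the computation is routine, and the only nontrivial input is Corollary \ref{pzm} (already proved via Schur's three-term-recurrence formula). The sole point that warrants care is bookkeeping the multiplicities of $e_n^\T\xi_i$ and the sign $(-1)^{m(m-1)n/2}$, both of which follow immediately by taking products indexed by $\Omega$.
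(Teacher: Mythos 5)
Your proposal is correct and follows essentially the same route as the paper: expand $e_{mn}^\T\eta_i^{(j)}$ via $e_{mn}=e_m\otimes e_n$ and the Kronecker structure of $\eta_i^{(j)}$ to get $\frac{Z(\mu_i^{(j)})}{Z_{m-1}(\mu_i^{(j)})}e_n^\T\xi_i$, then eliminate the denominator over all $(i,j)\in\Omega$ using Corollary \ref{pzm}. The only cosmetic difference is that you track the sign per fixed $i$ and then raise to the $n$-th power, while the paper applies Corollary \ref{pzm} to the full product at once; the content is identical.
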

\begin{proof}
	Noting $e_{mn}^\T=(e_m)^\T \otimes (e_n)^\T$ together with the definitions of $\eta_i^{(j)}$ and $Z(x)$, we have
	\begin{eqnarray}
	e_{mn}^\T\eta_i^{(j)}&=&(e_m^\T\otimes e_n^\T)\left(\frac{1}{Z_{m-1}(\mu_i^{(j)})}\begin{bmatrix}
	Z_{m-1}(\mu_i^{(j)})\\
	Z_{m-2}(\mu_i^{(j)})\\
	\vdots\\
	Z_0(\mu_i^{(j)})
	\end{bmatrix}\otimes \xi_i\right)\nonumber\\
	&=&\frac{1}{Z_{m-1}(\mu_i^{(j)})}\left(\sum_{k=0}^{m-1}Z_k(\mu_i^{(j)})\right)e_n^\T\xi_i\nonumber\\
	&=&\frac{1}{Z_{m-1}(\mu_i^{(j)})}Z(\mu_i^{(j)})e_n^\T\xi_i.\nonumber
	\end{eqnarray}
By Corollary \ref{pzm}, we have $\prod_{(i,j)\in \Omega} Z_{m-1}(\mu_i^{(j)})=\prod_{i=1}^n\prod_{j=1}^mZ_{m-1}(\mu_i^{(j)})=(-1)^\frac{m(m-1)n}{2}$ and hence Lemma \ref{exm} holds.
\end{proof}
\begin{lemma}
	\begin{equation*}
\prod_{(i_1,j_1)<(i_2,j_2)}(\mu_{i_2}^{(j_2)}-\mu_{i_1}^{(j_1)})=\left(\prod_{i=1}^{n}\prod_{1\le j_1<j_2\le m}\left(\mu_i^{(j_2)}-\mu_i^{(j_1)}\right)\right)\left(\prod_{1\le i_1< i_2\le n}(\lambda_{i_2}-\lambda_{i_1})\right)^m.
	\end{equation*}
\end{lemma}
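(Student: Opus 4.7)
The plan is to partition the LHS product according to whether the row indices coincide. Write
$$\text{LHS}=\underbrace{\prod_{i=1}^n\prod_{1\le j_1<j_2\le m}(\mu_i^{(j_2)}-\mu_i^{(j_1)})}_{\text{Part A}}\cdot\underbrace{\prod_{\substack{(i_1,j_1)<(i_2,j_2)\\ i_1\ne i_2}}(\mu_{i_2}^{(j_2)}-\mu_{i_1}^{(j_1)})}_{\text{Part B}}.$$
Part A is exactly the first factor of the RHS, so the task reduces to showing that Part B equals $\prod_{1\le i_1<i_2\le n}(\lambda_{i_2}-\lambda_{i_1})^m$.

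For Part B, I would regroup the factors by the \emph{unordered} pair of distinct row indices $\{i_a,i_b\}$ with $i_a<i_b$. Using the definition of the co-lex order, the triples $((i_1,j_1),(i_2,j_2))$ with $\{i_1,i_2\}=\{i_a,i_b\}$ and $(i_1,j_1)<(i_2,j_2)$ split into: (a) $j_1=j_2=j$ which forces $i_1=i_a$, $i_2=i_b$, yielding the factor $\mu_{i_b}^{(j)}-\mu_{i_a}^{(j)}$; and (b) $j_1<j_2$, in which case both orderings of $\{i_a,i_b\}$ are admissible, yielding $(\mu_{i_b}^{(j_2)}-\mu_{i_a}^{(j_1)})(\mu_{i_a}^{(j_2)}-\mu_{i_b}^{(j_1)})$. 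Swapping the dummy indices $j_1\leftrightarrow j_2$ in the second product of (b) converts $\prod_{j_1<j_2}(\mu_{i_a}^{(j_2)}-\mu_{i_b}^{(j_1)})$ into $(-1)^{\binom{m}{2}}\prod_{j_1>j_2}(\mu_{i_b}^{(j_2)}-\mu_{i_a}^{(j_1)})$. Combining (a) with both halves of (b) therefore assembles every ordered pair $(j_1,j_2)\in\{1,\ldots,m\}^2$, giving
$$\text{(contribution of }\{i_a,i_b\}\text{)}=(-1)^{\binom{m}{2}}\prod_{j_1,j_2=1}^{m}(\mu_{i_b}^{(j_2)}-\mu_{i_a}^{(j_1)}).$$

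At this point I would invoke Corollary~\ref{ppmu} to evaluate the inner double product as $(-1)^{m(m-1)/2}(\lambda_{i_b}-\lambda_{i_a})^m$. Multiplying the two sign factors yields $(-1)^{m(m-1)}=1$, so the contribution of each unordered pair reduces cleanly to $(\lambda_{i_b}-\lambda_{i_a})^m$. Taking the product over all $i_a<i_b$ then gives Part B $=\prod_{1\le i_1<i_2\le n}(\lambda_{i_2}-\lambda_{i_1})^m$, which combined with Part A proves the lemma.

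The main obstacle is purely bookkeeping: the lemma asserts an equality with no $\pm$ in front, so the sign from the reindexing step $(-1)^{\binom{m}{2}}$ must cancel exactly with the $(-1)^{m(m-1)/2}$ coming from Corollary~\ref{ppmu}, and one must verify that $m(m-1)$ is always even. I would double-check the counting of factors at each stage (the $\binom{mn}{2}$ total factors should split as $n\binom{m}{2}+\binom{n}{2}m^2$) to make sure no pair is missed or double-counted when the co-lex constraint is translated into constraints on $(j_1,j_2)$.
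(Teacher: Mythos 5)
Your proposal is correct and follows essentially the same route as the paper: split off the equal-row factors, regroup the cross-row factors for each pair $i_1<i_2$ into the full $m\times m$ double product at the cost of a sign $(-1)^{\binom{m}{2}}$, and evaluate that double product by Corollary~\ref{ppmu}, with the two signs cancelling because $m(m-1)$ is even. Your version merely makes the co-lex case analysis ($j_1=j_2$ versus $j_1<j_2$) and the factor count more explicit than the paper does.
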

\begin{proof}
	We have 
	\begin{equation}\label{pmumu}
\prod_{(i_1,j_1)<(i_2,j_2)}(\mu_{i_2}^{(j_2)}-\mu_{i_1}^{(j_1)})=\left(\prod_{i=1}^{n}\prod_{1\le j_1<j_2\le m}\left(\mu_{i}^{(j_2)}-\mu_{i}^{(j_1)}\right)\right)\left(\prod_{i_1\neq i_2}\prod_{(i_1,j_1)<(i_2,j_2)}\left(\mu_{i_2}^{(j_2)}-\mu_{i_1}^{(j_1)}\right)\right).
	\end{equation}
The second factor can be regrouped as 
	\begin{eqnarray}\label{sf}
	&&\prod_{1\le i_1< i_2\le n}\left(\prod_{(i_1,j_1)<(i_2,j_2)}\left(\mu_{i_2}^{(j_2)}-\mu_{i_1}^{(j_1)}\right)\right)\left(\prod_{(i_2,j_2)<(i_1,j_1)}\left(\mu_{i_1}^{(j_1)}-\mu_{i_2}^{(j_2)}\right)\right)\nonumber\\
	&=&\prod_{1\le i_1< i_2\le n}\left(\prod_{j_2=1}^{m}\prod_{j_1=1}^{m}\left(\mu_{i_2}^{(j_2)}-\mu_{i_1}^{(j_1)}\right)\right)\left(\prod_{(i_2,j_2)<(i_1,j_1)}(-1)\right).
	\end{eqnarray}
	By Corollary \ref{ppmu}, we have $\prod_{j_2=1}^{m}\prod_{j_1=1}^{m}\left(\mu_{i_2}^{(j_2)}-\mu_{i_1}^{(j_1)}\right)=(-1)^{\frac{m(m-1)}{2}}(\lambda_{i_2}-\lambda_{i_1})^m$. Moreover, noting that for any $i_1$ and $i_2$ with $1\le i_1<i_2\le n$, the inequality $(i_2,j_2)<(i_1,j_1)$ (in the co-lexicographical order) if and only $j_2<j_1$, we obtain  $\prod_{(i_2,j_2)<(i_1,j_1)}(-1)=(-1)^{\frac{m(m-1)}{2}}$. Thus Eq. \eqref{sf} reduces to 
	\begin{equation*}
	\prod_{1\le i_1< i_2\le n} (\lambda_{i_2}-\lambda_{i_1})^m.
	\end{equation*}
	and the lemma follows by Eq.~\eqref{pmumu}.
\end{proof}
\begin{lemma}\label{deteta}
	\begin{equation*}
	\det[\eta_1^{(1)},\ldots,\eta_n^{(1)};\ldots;\eta_1^{(m)},\ldots,\eta_n^{(m)}]= \left(\det[\xi_1,\xi_2,\ldots,\xi_n]\right)^m\prod_{i=1}^{n}\prod_{1\le j_1< j_2\le m}\left(\mu_i^{(j_2)}-\mu_i^{(j_1)}\right).
	\end{equation*}
\end{lemma}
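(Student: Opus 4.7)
The plan is to factor the big $mn\times mn$ matrix into a Kronecker piece that carries the $(\det \Xi)^m$ factor and a block-diagonal piece (after reordering) whose blocks are small Vandermonde-like determinants.

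First, writing $\Xi=[\xi_1,\ldots,\xi_n]$ and
$$y_i^{(j)}:=\frac{1}{Z_{m-1}(\mu_i^{(j)})}\left(Z_{m-1}(\mu_i^{(j)}),\,Z_{m-2}(\mu_i^{(j)}),\,\ldots,\,Z_0(\mu_i^{(j)})\right)^\T,$$
Definition~\ref{eigmu} gives $\eta_i^{(j)}=y_i^{(j)}\otimes \xi_i$. Viewing the target matrix $M:=[\eta_1^{(1)},\ldots,\eta_n^{(1)};\ldots;\eta_1^{(m)},\ldots,\eta_n^{(m)}]$ as an $m\times m$ array of $n\times n$ blocks, the $(k,j)$ block is $\Xi\cdot\diag\!\bigl((y_1^{(j)})_k,\ldots,(y_n^{(j)})_k\bigr)$. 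Hence $M=(I_m\otimes \Xi)\cdot D$, where $D$ is the $mn\times mn$ block matrix whose $(k,j)$ block is the diagonal matrix $\diag((y_1^{(j)})_k,\ldots,(y_n^{(j)})_k)$. This yields $\det M=(\det \Xi)^m\cdot \det D$.

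Next I compute $\det D$. Reordering the rows by $(k,i)\mapsto (i,k)$ and the columns by $(j,i)\mapsto (i,j)$ applies the same permutation to both, and so preserves the determinant. The resulting matrix is block diagonal with $n$ blocks $Y_1,\ldots,Y_n$, where $Y_i$ is the $m\times m$ matrix with entries $(Y_i)_{k,j}=Z_{m-k}(\mu_i^{(j)})/Z_{m-1}(\mu_i^{(j)})$. Thus $\det D=\prod_{i=1}^n \det Y_i$.

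Finally, for each $\det Y_i$ I pull out $1/Z_{m-1}(\mu_i^{(j)})$ from column $j$; by Corollary~\ref{pzm} these factors multiply to $(-1)^{m(m-1)/2}$. Then I reverse the row order (an additional sign $(-1)^{m(m-1)/2}$), producing a matrix whose $k$-th row is $\bigl(Z_{k-1}(\mu_i^{(1)}),\ldots,Z_{k-1}(\mu_i^{(m)})\bigr)$. Since each $Z_{k-1}$ is monic of degree $k-1$ by Definition~\ref{defz}, elementary row operations reduce this to the classical Vandermonde matrix in $\mu_i^{(1)},\ldots,\mu_i^{(m)}$. The two $(-1)^{m(m-1)/2}$ signs cancel, giving $\det Y_i=\prod_{1\le j_1<j_2\le m}(\mu_i^{(j_2)}-\mu_i^{(j_1)})$. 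Multiplying the pieces yields the stated formula. The main subtlety—not difficult but demanding care—is precisely this bookkeeping that makes the two $(-1)^{m(m-1)/2}$ signs (one from the column scaling via Corollary~\ref{pzm}, one from reversing $m$ rows) cancel exactly, producing a sign-free final identity.
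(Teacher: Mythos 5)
Your proposal is correct and follows essentially the same route as the paper's proof: factor the matrix as $(I_m\otimes[\xi_1,\ldots,\xi_n])$ times a block matrix of diagonal blocks, use permutation similarity to reduce to $n$ blocks of size $m\times m$, and evaluate each block via column scaling (with the sign from Corollary~\ref{pzm}) and a Vandermonde reduction using the monicity of the $Z_k$. The sign bookkeeping you describe matches the paper's computation, so no gaps remain.
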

\begin{proof}
	Let $E^{(j)}=[\eta_1^{(j)},\eta_2^{(j)},\ldots,\eta_n^{(j)}]$ and  $z_k^{(i,j)}=\frac{Z_k(\mu_i^{(j)})}{Z_{m-1}(\mu_i^{(j)})}$ for $i\in\{1,2\,\ldots,n\}$, $j\in\{1,2,\ldots,m\}$  and $k\in \{0,1,\ldots,m-1\}$. We have
	\begin{eqnarray}\label{ej}
	E^{(j)}&=&\left[\begin{bmatrix}
z_{m-1}^{(1,j)}\cdot \xi_1\\
z_{m-2}^{(1,j)} \cdot\xi_1\\
\vdots\\
z_{0}^{(1,j)}\cdot \xi_1
	\end{bmatrix},\begin{bmatrix}
	z_{m-1}^{(2,j)} \cdot\xi_2\\
	z_{m-2}^{(2,j)}\cdot\xi_2\\
	\vdots\\
	z_{0}^{(2,j)} \cdot\xi_2
	\end{bmatrix},\cdots,\begin{bmatrix}
	z_{m-1}^{(n,j)} \cdot\xi_n\\
	z_{m-2}^{(n,j)} \cdot\xi_n\\
	\vdots\\
	z_{0}^{(n,j)} \cdot\xi_n
	\end{bmatrix}\right]\nonumber\\
	&=&\begin{bmatrix}
	[\xi_1,\xi_2\,\ldots,\xi_n]\cdot\diag[z_{m-1}^{(1,j)},	z_{m-1}^{(2,j)},\ldots,	z_{m-1}^{(n,j)}]\\
	[\xi_1,\xi_2\,\ldots,\xi_n]\cdot\diag[z_{m-2}^{(1,j)},	z_{m-2}^{(2,j)},\ldots,	z_{m-2}^{(n,j)}]\\
	\vdots\\
	[\xi_1,\xi_2\,\ldots,\xi_n]\cdot\diag[z_{0}^{(1,j)},	z_{0}^{(2,j)},\ldots,	z_{0}^{(n,j)}]
	\end{bmatrix}.
	\end{eqnarray}
	Let $Q=[\xi_1,\xi_2,\ldots,\xi_n]$ and $K_{j_1,j_2}=\diag[z_{m-j_1}^{(1,j_2)},	z_{m-j_1}^{(2,j_2)},\ldots,	z_{m-j_1}^{(n,j_2)}]$ for $j_1,j_2\in\{1,2\ldots,m\}$. Then we can rewrite Eq. \eqref{ej} as 
	\begin{equation*}
	E^{(j)}=\begin{bmatrix} Q\cdot K_{1,j}\\
	 Q\cdot K_{2,j}\\
	 \vdots\\
	  Q\cdot K_{m,j}
	  \end{bmatrix}=\begin{bmatrix}Q&&&\\
	  &Q&&\\
	  &&\ddots&\\
	  &&&Q\end{bmatrix}\begin{bmatrix}  K_{1,j}\\
	   K_{2,j}\\
	  \vdots\\
	   K_{m,j}
	  \end{bmatrix}.
	\end{equation*}
which implies the following identity:
\begin{equation}\label{e1n}
[E^{(1)},E^{(2)},\ldots,E^{(n)}]=\begin{bmatrix}
Q&&&\\
&Q&&\\
&&\ddots&\\
&&&Q
\end{bmatrix} \begin{bmatrix}
K_{1,1}&K_{1,2}&\cdots&K_{1,m}\\
K_{2,1}&K_{2,2}&\cdots&K_{2,m}\\
\cdots&\cdots&\cdots&\cdots\\
K_{m,1}&K_{m,2}&\cdots&K_{m,m}
\end{bmatrix}.
\end{equation}
Since each $K_{j_1,j_2}$ is a diagonal matrix, one easily sees that the block matrix $(K_{j_1,j_2})$ is permutation similar to the following block diagonal matrix
\begin{equation*}
\begin{bmatrix}
L_1&&&\\
&L_2&&\\
&&\ddots&\\
&&&L_n
\end{bmatrix},
\end{equation*}
where each $L_i$ is a square matrix of order $m$ and the $(j_1,j_2)$-th entry of  $L_i$ is the $i$-th diagonal entry of $K_{j_1,j_2}$.

We have
 \begin{eqnarray}\label{Li}
L_i&=&\begin{bmatrix}
z_{m-1}^{(i,1)}& z_{m-1}^{(i,2)}&\ldots& z_{m-1}^{(i,m)}\\
z_{m-2}^{(i,1)}& z_{m-2}^{(i,2)}&\ldots& z_{m-2}^{(i,m)}\\
\cdots&\cdots&\cdots&\cdots\\
z_{0}^{(i,1)}& z_{0}^{(i,2)}&\ldots& z_{0}^{(i,m)}\\
\end{bmatrix}\nonumber\\
&=&\begin{bmatrix}
Z_{m-1}(\mu_i^{(1)})&\ldots&  Z_{m-1}(\mu_i^{(m)})\\
Z_{m-2}(\mu_i^{(1)})&\ldots&  Z_{m-2}(\mu_i^{(m)})\\
\cdots&\cdots&\cdots\\
Z_{0}(\mu_i^{(1)})&\ldots&  Z_{0}(\mu_i^{(m)})\\
\end{bmatrix}\begin{bmatrix}
\frac{1}{Z_{m-1}(\mu_i^{(1)})}&&\\
&\ddots&\\
&& \frac{1}{Z_{m-1}(\mu_i^{(m)})}\\
\end{bmatrix}.
\end{eqnarray}
Note that $Z_k(x)$ is a monic polynomial with degree $k$ for each nonnegative $k$. By some evident row operations, one easily sees that the determinant of the first factor in Eq.~\eqref{Li} equals
 \begin{equation*}
 \det\begin{bmatrix}
 (\mu_i^{(1)})^{m-1}&(\mu_i^{(2)})^{m-1}&\cdots& (\mu_i^{(m)})^{m-1}\\
 \cdots&\cdots&\cdots&\cdots\\
 \mu_i^{(1)}&\mu_i^{(2)}&\cdots& \mu_i^{(m)}\\
 1&1& \cdots&1
 \end{bmatrix}=(-1)^{\frac{m(m-1)}{2}}\prod_{1\le j_1< j_2\le m}\left(\mu_i^{(j_2)}-\mu_i^{(j_1)}\right).
 \end{equation*}
 It follows from Eq.~\eqref{Li} and Corollary \ref{pzm} that 
 $\det L_i=\prod_{1\le j_1\le j_2}(\mu_i^{(j_2)}-\mu_i^{(j_1)})$. Finally, by Eq.~\eqref{e1n}, we obtain
 $$\det [E^{(1)},E^{(2)},\ldots,E^{m}]=(\det Q)^m\prod_{1\le i\le n}\det(L_i)=\left(\det Q\right)^m\prod_{i=1}^{n}\prod_{1\le j_1< j_2\le m}\left(\mu_i^{(j_2)}-\mu_i^{(j_1)}\right).$$
 This completes the proof.
\end{proof}
\noindent\textbf{Proof of Proposition \ref{dwt}} By Corollary \ref{dwt3} and Lemmas \ref{exm}-\ref{deteta}, we have
\begin{eqnarray*}
&&\det W_\tau(G\circ P_m)\\
&=& \frac{\prod_{(i_1,j_1)<(i_2,j_2)}(\mu_{i_2}^{(j_2)}-\mu_{i_1}^{(j_1)})\prod_{(i,j)\in \Omega}(e_{mn}^\T \eta_i^{(j)})}{\det[\eta_1^{(1)},\ldots,\eta_n^{(1)};\ldots;\eta_1^{(m)},\ldots,\eta_n^{(m)}]}\\
&=&\frac{\left(\prod_{1\le i_1< i_2\le n}(\lambda_{i_2}-\lambda_{i_1})\right)^m (-1)^\frac{m(m-1)n}{2} \left(\prod_{(i,j)\in \Omega}Z(\mu_i^{(j)})\right)\left(\prod_{1\le i\le n}e_n^\T \xi_i\right)^m}{\left(\det[\xi_1,\xi_2,\ldots,\xi_n]\right)^m}\\
&=&(-1)^\frac{m(m-1)n}{2}\left(\prod_{(i,j)\in \Omega}Z(\mu_i^{(j)})\right)\left(\det W_\tau(G)\right)^m.
\end{eqnarray*}
This completes the proof of Proposition \ref{dwt}.

\section{Proof of Theorem \ref{twq}}\label{pft}
According to Proposition \ref{dwt}, to show Theorem \ref{twq}, we need to determine the exact value of $\prod_{i=1}^n\prod_{j=1}^m Z^{(1)}(\mu_i^{(j)})$. We reserve the superscript $\tau$ in this section.

 Let $\{U_n(x)\}$ and $\{W_n(x)\}$ be the Chebyshev polynomials of the second and the fourth kinds \cite{mason2003}, defined by the same three-term recurrence  $X_n(x)=2x X_{n-1}(x)-X(x)$ but with different initial values:
 \begin{equation*}
 \begin{cases}
 U_0(x)=1\\
 U_1(x)=2x
 \end{cases} \text{and~}
 \begin{cases}
 W_0(x)=1\\
 W_1(x)=2x+1.
 \end{cases}
 \end{equation*} 
 The following simple observation is vital.
 \begin{observation}
 $Z_n^{(0)}(x)=U_n(\frac{x}{2})$ and 	$Z_n^{(1)}(x)=W_n(\frac{x}{2}-1)$.
 \end{observation}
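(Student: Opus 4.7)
The plan is to verify this observation by a straightforward induction on $n$, since both $Z_n^{(\tau)}(x)$ (for $\tau \in \{0,1\}$) and the shifted Chebyshev polynomials satisfy linear three-term recurrences, so matching the base cases and the recurrence suffices.

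First I would recall the recurrences. By Definition \ref{defz}, $Z_0^{(\tau)}(x)=1$, $Z_1^{(\tau)}(x)=x-\tau$, and $Z_k^{(\tau)}(x)=(x-2\tau)Z_{k-1}^{(\tau)}(x)-Z_{k-2}^{(\tau)}(x)$. The Chebyshev polynomials $U_n$ and $W_n$ both satisfy $X_n(y)=2yX_{n-1}(y)-X_{n-2}(y)$, with $U_0(y)=1$, $U_1(y)=2y$, $W_0(y)=1$, $W_1(y)=2y+1$.

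For the case $\tau=0$, substitute $y=x/2$. Then $U_0(x/2)=1=Z_0^{(0)}(x)$ and $U_1(x/2)=x=Z_1^{(0)}(x)$, matching the base cases. The recurrence becomes $U_n(x/2)=xU_{n-1}(x/2)-U_{n-2}(x/2)$, which is exactly the recurrence for $Z_n^{(0)}(x)$ with $\tau=0$. Hence by induction $Z_n^{(0)}(x)=U_n(x/2)$ for all $n\ge 0$.

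For the case $\tau=1$, substitute $y=x/2-1$. Then $W_0(x/2-1)=1=Z_0^{(1)}(x)$ and $W_1(x/2-1)=2(x/2-1)+1=x-1=Z_1^{(1)}(x)$, again matching the base cases. The Chebyshev recurrence becomes $W_n(x/2-1)=(x-2)W_{n-1}(x/2-1)-W_{n-2}(x/2-1)$, which coincides with the recurrence for $Z_n^{(1)}(x)$ (i.e.\ the $\tau=1$ case with coefficient $x-2\tau=x-2$). Induction then gives $Z_n^{(1)}(x)=W_n(x/2-1)$ for all $n\ge 0$. No serious obstacle is expected here since everything reduces to checking two initial values and a single coefficient match in the recurrence; the real work of the paper lies in using this identification subsequently to evaluate $\prod_{i,j}Z^{(1)}(\mu_i^{(j)})$ via resultants of Chebyshev polynomials.
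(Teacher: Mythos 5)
Your induction argument is correct: matching the two initial values and verifying that the substitution $y=x/2$ (resp.\ $y=x/2-1$) turns the Chebyshev recurrence $X_n(y)=2yX_{n-1}(y)-X_{n-2}(y)$ into the recurrence $Z_k^{(\tau)}(x)=(x-2\tau)Z_{k-1}^{(\tau)}(x)-Z_{k-2}^{(\tau)}(x)$ for $\tau=0$ and $\tau=1$ is exactly the (implicit) justification the paper has in mind, since it states this as a simple observation without proof. Nothing is missing.
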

 \begin{definition}\normalfont{
 		Let $f(x)=a_nx^n+a_{n-1}x^{n-1}+\cdots+a_1x+a_0$ and $g(x)=b_mx^m+b_{m-1}x^{m-1}+\cdots+b_1x+b_0$. The resultant of $f(x)$ and $g(x)$, denoted by  $\Res(f(x),g(x))$, is defined to be
 		$$a_n^mb_m^n\prod_{1\le i\le n,1\le j\le m}(\alpha_i-\beta_j),$$
 		where $\alpha_i$'s and $\beta_j$'s are the roots (in complex field $\mathbb{C}$) of $f(x)$ and $g(x)$, respectively.
 	}
 \end{definition}
 We list some basic properties of resultants for convenience.
 \begin{lemma}\label{basicres}
 	Let $f(x)=a_nx^n+\cdots+a_0=a_n\prod_{i=1}^n(x-\alpha_i)$ and $g(x)=b_mx^m+\cdots+b_0=b_m\prod_{j=1}^m(x-\beta_j)$. Then the followings hold:\\
 	\textup{(\rmnum{1})} $\Res(f(x),g(x))=a_n^m\prod_{i=1}^{n}g(\alpha_i) =(-1)^{mn}b_m^{n}\prod_{j=1}^m f(\beta_j);$\\
 %	\textup{(\rmnum{2})} If $m<n$ then $\Res_x(f(x)+tg(x),g(x))=\Res_x(f(x),g(x))$ for any $t\in \mathbb{C}$;\\
 	\textup{(\rmnum{2})} $\Res(f(tx+s),g(tx+s))=\Res(f(tx),g(tx))=t^{mn}\Res(f(x),g(x))$ for any $t\in \mathbb{C}\setminus\{0\}$ and $s\in \mathbb{C}$.
 \end{lemma}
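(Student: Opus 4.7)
The plan is to prove both parts of the lemma directly from the definition of the resultant, with minor algebraic manipulations. Neither statement is deep; the work consists of keeping track of signs and powers of the leading coefficients.

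For part (\rmnum{1}), I would start from the definition
\[
\Res(f,g)=a_n^m b_m^n\prod_{i=1}^n\prod_{j=1}^m(\alpha_i-\beta_j)
\]
and use the factorisation $g(x)=b_m\prod_{j=1}^m(x-\beta_j)$, which yields $\prod_{j=1}^m(\alpha_i-\beta_j)=g(\alpha_i)/b_m$. Collapsing the inner product then gives $\Res(f,g)=a_n^m\prod_{i=1}^n g(\alpha_i)$, because the $n$ copies of $b_m$ introduced by the inner products cancel the $b_m^n$ in front. The dual formula is obtained by swapping the order of the double product and writing $\prod_{i=1}^n(\alpha_i-\beta_j)=(-1)^n\prod_{i=1}^n(\beta_j-\alpha_i)=(-1)^n f(\beta_j)/a_n$; regrouping produces the factor $(-1)^{mn}$ and the form $(-1)^{mn}b_m^n\prod_{j=1}^m f(\beta_j)$.

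For part (\rmnum{2}), the key is to identify the roots and leading coefficients of $f(tx+s)$ and $g(tx+s)$. The roots of $f(tx+s)$ are $(\alpha_i-s)/t$ and its leading coefficient is $a_n t^n$; similarly for $g$. Substituting into the definition gives
\[
\Res(f(tx+s),g(tx+s))=(a_n t^n)^m(b_m t^m)^n\prod_{i,j}\!\left(\frac{\alpha_i-s}{t}-\frac{\beta_j-s}{t}\right)=t^{mn}a_n^m b_m^n\prod_{i,j}(\alpha_i-\beta_j),
\]
since the shifts by $s$ cancel within each factor and the $t^{-1}$ contributions combine to $t^{-mn}$, leaving a net $t^{2mn}\cdot t^{-mn}=t^{mn}$. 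Setting $s=0$ gives the same expression for $\Res(f(tx),g(tx))$, which proves both equalities simultaneously.

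The only real subtlety is the sign bookkeeping in part (\rmnum{1}), specifically the appearance of $(-1)^{mn}$, and making sure the factor $t^{mn}$ (rather than $t^{nm}$ or something involving separate powers of $t^m$ and $t^n$) is obtained correctly in part (\rmnum{2}). No deeper obstacle is expected; the lemma merely packages standard identities that will be applied later to Chebyshev polynomials to evaluate $\prod_{i,j} Z(\mu_i^{(j)})$.
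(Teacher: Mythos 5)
Your proof is correct. The paper itself states Lemma \ref{basicres} without proof, treating it as a list of standard facts about resultants, and your direct verification from the root--leading-coefficient definition (cancelling the $b_m^n$ in part (\rmnum{1}), tracking the sign $(-1)^{mn}$, and computing the leading coefficients $a_nt^n$, $b_mt^m$ and shifted roots $(\alpha_i-s)/t$, $(\beta_j-s)/t$ in part (\rmnum{2})) is exactly the routine argument the authors implicitly rely on; the bookkeeping of $t^{2mn}\cdot t^{-mn}=t^{mn}$ is right.
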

 \begin{lemma} \label{resw}
 	For any positive integer $m$ and complex number $t$,
 $$	\Res\left(W_m(x)+tW_{m-1}(x),\sum_{k=0}^{m-1}W_k(x)\right)=(-1)^{m(m-1)}2^{m(m-1)}(1-t)^{m-1}.$$
 \end{lemma}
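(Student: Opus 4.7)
The plan is to identify the roots of $g(x):=\sum_{k=0}^{m-1}W_k(x)$ explicitly and then apply the second form of Lemma \ref{basicres}(\rmnum{1}) to $f(x):=W_m(x)+tW_{m-1}(x)$, exploiting the fact that $W_m$ and $W_{m-1}$ take remarkably simple values at those roots.

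The key input is the trigonometric representation $W_n(\cos\theta)=\sin((n+\tfrac12)\theta)/\sin(\theta/2)$, which is easily verified by induction from $W_0=1$, $W_1(x)=2x+1$ and the recurrence $W_n=2xW_{n-1}-W_{n-2}$. Taking the imaginary part of the geometric series $\sum_{k=0}^{m-1}e^{i(k+1/2)\theta}$ gives $\sum_{k=0}^{m-1}\sin((k+\tfrac12)\theta)=\sin^2(m\theta/2)/\sin(\theta/2)$, whence
\begin{equation*}
g(\cos\theta)=\frac{\sin^2(m\theta/2)}{\sin^2(\theta/2)}.
\end{equation*}
Since $g$ is a polynomial of degree $m-1$ in $x$ with leading coefficient $2^{m-1}$, this representation pins down its $m-1$ roots, counted with multiplicity, as $\beta_j=\cos(2j\pi/m)$ for $j=1,\ldots,m-1$.

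I would then evaluate $f$ at each $\beta_j$ via the same trigonometric formula. With $\theta=2j\pi/m$,
\begin{equation*}
W_m(\beta_j)=\frac{\sin((2m+1)j\pi/m)}{\sin(j\pi/m)}=1,\qquad W_{m-1}(\beta_j)=\frac{\sin((2m-1)j\pi/m)}{\sin(j\pi/m)}=-1,
\end{equation*}
so $f(\beta_j)=1-t$ independently of $j$. Applying Lemma \ref{basicres}(\rmnum{1}) with leading coefficient $b_{m-1}=2^{m-1}$ and $n=\deg f=m$,
\begin{equation*}
\Res(f,g)=(-1)^{m(m-1)}(2^{m-1})^m\prod_{j=1}^{m-1}f(\beta_j)=(-1)^{m(m-1)}2^{m(m-1)}(1-t)^{m-1},
\end{equation*}
which is exactly the claimed identity.

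The only real subtlety is that the $\beta_j$'s come with nontrivial multiplicities (the pairing $j\leftrightarrow m-j$ in the cosine makes most of them double roots of $g$, and when $m$ is even the value $\beta_{m/2}=-1$ is a simple root while the rest are double), and one must make sure those multiplicities are correctly accounted for when applying Lemma \ref{basicres}(\rmnum{1}). Because the evaluation $f(\beta_j)=1-t$ does not depend on $j$, this bookkeeping is essentially invisible in the final product, and the argument proceeds uniformly regardless of the parity of $m$; this, together with the trigonometric identity for $W_n$ itself, accounts for all the routine verifications that are needed up front.
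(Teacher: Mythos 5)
Your proposal is correct and follows essentially the same route as the paper: the trigonometric form of $W_k$, the evaluation $\sum_{k=0}^{m-1}W_k(\cos\theta)=\frac{1-\cos m\theta}{2\sin^2(\theta/2)}=\frac{\sin^2(m\theta/2)}{\sin^2(\theta/2)}$, the identification of the roots $\beta_j=\cos(2\pi j/m)$, $1\le j\le m-1$, counted with multiplicity, and then Lemma \ref{basicres}(\rmnum{1}) with $W_m(\beta_j)=1$, $W_{m-1}(\beta_j)=-1$. The only difference is that the paper pins down the multiplicities explicitly (each interior $\cos(2\pi j/m)$ is a double root because $2\pi j/m$ is a double zero of $1-\cos m\theta$ and $\frac{d\cos\theta}{d\theta}\neq 0$ there, plus a degree count), whereas you only gesture at this bookkeeping; spelling out that one step would make your argument complete.
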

\begin{proof}
 As $W_k(\cos \theta)=\frac{\sin(k+\frac{1}{2})\theta}{\sin\frac{\theta}{2}}$ \cite{mason2003} and using the identity 
 $$\sin \left(k+\frac{1}{2}\right)\theta=\frac{\cos k\theta-\cos(k+1)\theta}{2\sin\frac{\theta}{2}},$$ we find that 
 	\begin{equation}\label{sw}
 	\sum_{k=0}^{m-1}W_k(\cos \theta)=\frac{1-\cos m\theta}{2\sin^2\frac{\theta}{2}}.
 	\end{equation}
 	
 	We claim that the zeros of $\sum_{k=0}^{m-1}W_k(x)$ (including multiplicity) are exactly $\beta_j:=\cos\frac{2\pi j}{m}$ for $1\le j\le m-1$. Let $\theta_i=\frac{2\pi i}{m}$ for $1\le i\le \lfloor\frac{m}{2}\rfloor$. Assume $m$ is even. Clearly,   by Eq.~\eqref{sw}, each  $\cos \theta_i$ with $1\le i\le \frac{m}{2}$ is a  root of $\sum_{k=0}^{m-1}W_k(x)$. Moreover, except for $i=\frac{m}{2}$, each $\cos \theta_i$ is a multiple root of  $\sum_{k=0}^{m-1}W_k(x)$ as $\theta_i$ is a multiple root of $\frac{1-\cos m\theta}{2\sin ^2\frac{\theta}{2}}$ and $\frac{\mathrm{d} \cos \theta}{\mathrm{d}  \theta}|_{\theta=\theta_i}=-\sin \theta_i\neq 0$. Noting that $\sum_{k=0}^{m-1}W_k(x)$ is a polynomial of degree $m-1$, it has exactly $\frac{m}{2}$ double roots $\cos\theta_i$ for $1\le i<\frac{m}{2}$ together with a single root $\cos \theta_\frac{m}{2}$, which is $-1$. For the case that $m$ is odd, a similar argument shows that $\sum_{k=0}^{m-1}W_k(x)$ has exactly $\frac{m-1}{2}$ double roots $\cos \theta_i$ for $1\le i\le \frac{m-1}{2}$.  This proves the  claim.
 	
 	By the claim and noting that the leading term of $\sum_{k=0}^{m-1}W_k(x)$ is $2^{m-1}x^{m}$, we have 
 	\begin{eqnarray*}
 	&&	\Res\left(W_m(x)+tW_{m-1}(x),\sum_{k=0}^{m-1}W_k(x)\right)\\
 	&=&(-1)^{m(m-1)}2^{(m-1)m}\prod_{j=1}^{m-1}(W_m(\beta_j)+tW_{m-1}(\beta_j))\\
 	 	&=&(-1)^{m(m-1)}2^{m(m-1)}\prod_{j=1}^{m-1}\left(\frac{\sin (m+\frac{1}{2})\frac{2\pi j}{m}}{\sin\frac{\pi j}{m}}+t\frac{\sin (m-\frac{1}{2})\frac{2\pi j}{m}}{\sin\frac{\pi j}{m}}\right)\\
 	 		&=&(-1)^{m(m-1)}2^{m(m-1)}(1-t)^{m-1}.
 	\end{eqnarray*}
 This completes the proof.
\end{proof}
\begin{remark}\normalfont{
A similar version of Lemma \ref{resw} concerning the second kind of Chebyshev polynomials was reported in our previous paper \cite{wym2023}. The current proof is a slight modification of the original proof due to Terrence Tao \cite{MO}.}
\end{remark}
\begin{proposition}\label{ppz}
	$\prod_{i=1}^n\prod_{j=1}^m Z^{(1)}(\mu_i^{(j)})=(-1)^{m(m-1)n}(\det Q(G))^{m-1}.$
\end{proposition}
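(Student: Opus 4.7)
The plan is to convert the product on the left-hand side into a resultant that can be evaluated by Lemma \ref{resw}. Fix $i \in \{1,\ldots,n\}$. The $m$ numbers $\mu_i^{(1)},\ldots,\mu_i^{(m)}$ are precisely the roots of the monic polynomial $f_i(x) := Z_m^{(1)}(x) + (1-\lambda_i) Z_{m-1}^{(1)}(x)$ of degree $m$, while $Z^{(1)}(x) = \sum_{k=0}^{m-1} Z_k^{(1)}(x)$ has degree $m-1$ (being a sum of monic polynomials of strictly increasing degrees). Hence Lemma \ref{basicres}(\rmnum{1}) gives
\begin{equation*}
\prod_{j=1}^m Z^{(1)}\!\left(\mu_i^{(j)}\right) = \Res\bigl(f_i(x),\, Z^{(1)}(x)\bigr).
\end{equation*}

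The next step exploits the observation $Z_k^{(1)}(x) = W_k(\tfrac{x}{2}-1)$ to turn this into a resultant of fourth-kind Chebyshev polynomials, where Lemma \ref{resw} is directly applicable. Under the substitution $x = 2y + 2$,
\begin{equation*}
f_i(2y+2) = W_m(y) + (1-\lambda_i) W_{m-1}(y),\qquad Z^{(1)}(2y+2) = \sum_{k=0}^{m-1} W_k(y).
\end{equation*}
Applying Lemma \ref{basicres}(\rmnum{2}) with $t = 2$, $s = 2$ yields
\begin{equation*}
\Res_y\!\left(W_m(y) + (1-\lambda_i) W_{m-1}(y),\, \sum_{k=0}^{m-1} W_k(y)\right) = 2^{m(m-1)} \Res_x\bigl(f_i(x),\, Z^{(1)}(x)\bigr).
\end{equation*}
Now Lemma \ref{resw} with parameter $t = 1 - \lambda_i$ evaluates the left-hand side to $(-1)^{m(m-1)} 2^{m(m-1)} \lambda_i^{m-1}$, since the factor $(1-t)^{m-1}$ collapses to $\lambda_i^{m-1}$. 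Cancelling $2^{m(m-1)}$ gives
\begin{equation*}
\prod_{j=1}^m Z^{(1)}\!\left(\mu_i^{(j)}\right) = (-1)^{m(m-1)} \lambda_i^{m-1}.
\end{equation*}

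Finally, taking the product over $i = 1,\ldots,n$ and using $\prod_{i=1}^n \lambda_i = \det Q(G)$ (the $\lambda_i$ being the $A_\tau$-eigenvalues of $G$ with $\tau=1$, i.e., the $Q$-eigenvalues) produces the claimed $(-1)^{m(m-1)n}(\det Q(G))^{m-1}$. There is no genuine obstacle beyond correctly setting up the change of variables $x = 2y+2$: Lemma \ref{resw} has already done the substantive analytic work, and the identification of the parameter $t = 1-\lambda_i$ is forced by the form of $f_i$. The only care needed is in tracking that $Z^{(1)}$ has degree exactly $m-1$ so that Lemma \ref{basicres}(\rmnum{1}) applies in the intended form, and in matching the $2^{m(m-1)}$ scaling factor from Lemma \ref{basicres}(\rmnum{2}) against the identical factor coming out of Lemma \ref{resw}.
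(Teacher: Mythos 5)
Your proposal is correct and follows essentially the same route as the paper: convert $\prod_j Z^{(1)}(\mu_i^{(j)})$ into $\Res\bigl(Z_m^{(1)}+(1-\lambda_i)Z_{m-1}^{(1)},\,\sum_{k=0}^{m-1}Z_k^{(1)}\bigr)$ via Lemma \ref{basicres}(\rmnum{1}), pass to fourth-kind Chebyshev polynomials through $Z_k^{(1)}(x)=W_k(\frac{x}{2}-1)$ and the scaling in Lemma \ref{basicres}(\rmnum{2}), evaluate with Lemma \ref{resw} at $t=1-\lambda_i$, and multiply over $i$ using $\prod_i\lambda_i=\det Q(G)$. The bookkeeping of the factor $2^{m(m-1)}$ and the sign matches the paper's computation exactly.
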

\begin{proof}
Recall that $Z^{(1)}(x)=\sum_{k=0}^{m-1}Z_k^{(1)}(x)$ and $\mu_i^{(j)}$'s are roots of $Z_m^{(1)}(x)+(1-\lambda_i)Z_{m-1}^{(1)}(x)$, where $\lambda_i$'s are $Q$-eigenvalues of $G$. Since $Z_m^{(1)}(x)+(1-\lambda_i)Z_{m-1}^{(1)}(x)$ is monic and $Z_k^{(1)}(x)=W_k(\frac{x}{2}-1)$, Lemma  \ref{basicres} implies
\begin{eqnarray*}
\prod_{j=1}^{m}Z^{(1)}(\mu_i^{(j)})&=&\Res\left(Z_m^{(1)}(x)+(1-\lambda_i)Z_{m-1}^{(1)}(x),\sum_{k=0}^{m-1}Z_k^{(1)}(x)\right)\\
&=&\Res\left(W_m\left(\frac{x}{2}-1\right)+(1-\lambda_i)W_{m-1}\left(\frac{x}{2}-1\right),\sum_{k=0}^{m-1}W_k\left(\frac{x}{2}-1\right)\right)\\
&=&\left(\frac{1}{2}\right)^{m(m-1)}\Res\left(W_m(x)+(1-\lambda_i)W_{m-1}(x)),\sum_{k=0}^{m-1}W_k(x)\right)\\
&=&(-1)^{m(m-1)}\lambda_i^{m-1}.
\end{eqnarray*}
Noting that $\prod_{i=1}^{n}\lambda_i=\det Q(G)$, we obtain
	$$\prod_{i=1}^n\prod_{j=1}^m Z^{(1)}(\mu_i^{(j)})=\prod_{i=1}^{n}(-1)^{m(m-1)}\lambda_i^{m-1}=(-1)^{m(m-1)n}(\det Q(G))^{m-1},$$
	completing the proof.
\end{proof}
\noindent\textbf{Proof of Theorem \ref{twq}} By Propositions \ref{dwt} and \ref{ppz}, Theorem \ref{twq} follows.

We end this paper by suggesting a possible way to resolve Conjecture \ref{mainconj}.
\begin{proposition}
	Let $\{Z_k^{(\tau)}(x)\}$ be the sequence of polynomials described in Definition \ref{defz}.  If 
	\begin{equation}\label{conres}
\Res\left(Z_m^{(\tau)}(x)+tZ_{m-1}^{(\tau)}(x),\sum_{k=0}^{m-1}Z_k^{(\tau)}(x)\right)=(-1)^{m(m-1)}(\tau-t)^{\lfloor\frac{m}{2}\rfloor}(1-\tau t)^{\lfloor\frac{m-1}{2}\rfloor}
	\end{equation}
	then Conjecture \ref{mainconj} holds.
\end{proposition}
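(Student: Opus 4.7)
The plan is to combine the assumed resultant identity with Proposition \ref{dwt} and reinterpret the resulting product over $A_\tau$-eigenvalues as two determinants. This closely parallels the argument of Proposition \ref{ppz}, which handled the special case $\tau=1$.

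First, I would start from Proposition \ref{dwt}, which tells us that
$$\det W_\tau(G\circ P_m)=\pm \left(\det W_\tau(G)\right)^m\prod_{i=1}^n\prod_{j=1}^m Z^{(\tau)}(\mu_i^{(j)}).$$
So the entire task is to evaluate the double product in closed form. For each fixed $i$, I would recall that $\mu_i^{(1)},\ldots,\mu_i^{(m)}$ are exactly the roots of the monic polynomial $Z_m^{(\tau)}(x)+(\tau-\lambda_i)Z_{m-1}^{(\tau)}(x)$. Applying part (i) of Lemma \ref{basicres}, the inner product is a resultant:
$$\prod_{j=1}^{m}Z^{(\tau)}(\mu_i^{(j)})=\Res\left(Z_m^{(\tau)}(x)+(\tau-\lambda_i)Z_{m-1}^{(\tau)}(x),\ \sum_{k=0}^{m-1}Z_k^{(\tau)}(x)\right).$$

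Next, I would invoke the hypothesized identity (\ref{conres}) with the specialization $t=\tau-\lambda_i$. The factor $\tau-t$ collapses to $\lambda_i$, while $1-\tau t$ becomes $1-\tau^2+\tau\lambda_i$, giving
$$\prod_{j=1}^{m}Z^{(\tau)}(\mu_i^{(j)})=(-1)^{m(m-1)}\lambda_i^{\lfloor m/2\rfloor}\bigl(1-\tau^2+\tau\lambda_i\bigr)^{\lfloor (m-1)/2\rfloor}.$$
Taking the product over $i=1,\ldots,n$ and pulling the exponents outside (they are independent of $i$), one obtains
$$\prod_{i=1}^n\prod_{j=1}^m Z^{(\tau)}(\mu_i^{(j)})=\pm\left(\prod_{i=1}^n \lambda_i\right)^{\lfloor m/2\rfloor}\left(\prod_{i=1}^n(1-\tau^2+\tau\lambda_i)\right)^{\lfloor (m-1)/2\rfloor}.$$

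Finally, I would identify the two products on the right with determinants. Since $\lambda_1,\ldots,\lambda_n$ are the $A_\tau$-eigenvalues of $G$, we have $\prod_i \lambda_i=\det A_\tau(G)$, and the eigenvalues of $(1-\tau^2)I_n+\tau A_\tau(G)$ are $1-\tau^2+\tau\lambda_i$, so $\prod_i(1-\tau^2+\tau\lambda_i)=\det((1-\tau^2)I_n+\tau A_\tau(G))$. Substituting into Proposition \ref{dwt} yields exactly the formula in Conjecture \ref{mainconj}. There is essentially no obstacle in this implication itself — it is a direct specialization-and-substitution argument; the genuine difficulty, which remains open, lies in establishing the resultant identity (\ref{conres}) for general $\tau$, which presumably requires a Chebyshev-style trigonometric parametrization of the zeros of $\sum_{k=0}^{m-1}Z_k^{(\tau)}(x)$ analogous to (but considerably more delicate than) the one used in the proof of Lemma \ref{resw}.
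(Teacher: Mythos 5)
Your proposal is correct and follows essentially the same route as the paper: reduce via Proposition \ref{dwt}, convert the inner product over $j$ into the resultant using Lemma \ref{basicres} and the monicity of $Z_m^{(\tau)}(x)+(\tau-\lambda_i)Z_{m-1}^{(\tau)}(x)$, specialize \eqref{conres} at $t=\tau-\lambda_i$, and identify the products over $i$ with $\det A_\tau(G)$ and $\det((1-\tau^2)I_n+\tau A_\tau(G))$. No gaps.
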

\begin{proof}
	By Proposition \ref{dwt}, it suffices to show
	  \begin{equation*}
\prod_{i=1}^n\prod_{j=1}^m Z^{(\tau)}(\mu_i^{(j)})=\pm (\det A_\tau(G))^{\lfloor\frac{m}{2}\rfloor}(\det ((1-\tau^2)I_n+\tau A_\tau(G)))^{\lfloor\frac{m-1}{2}\rfloor}.
	\end{equation*}
By a similar argument as in Proposition \ref{ppz} and using Eq. \eqref{conres}, we obtain
\begin{eqnarray}\label{pzt}
\prod_{j=1}^m Z^{(\tau)}(\mu_i^{(j)})&=&\Res\left(Z_m^{(\tau)}(x)+(\tau-\lambda_i)Z_{m-1}^{(\tau)}(x),\sum_{k=0}^{m-1}Z_{k}^{(\tau)}(x)\right)\nonumber\\
&=&(-1)^{m(m-1)} (\tau-(\tau-\lambda_i))^{\lfloor\frac{m}{2}\rfloor}(1-\tau(\tau-\lambda_i))^{\lfloor\frac{m-1}{2}\rfloor}\nonumber\\
&=&\pm \lambda_i^{\lfloor\frac{m}{2}\rfloor}(1-\tau^2+\tau \lambda_i)^{\lfloor\frac{m-1}{2}\rfloor}.
\end{eqnarray}
Note that  $\{\lambda_i\colon\, 1\le i\le n\}$ and $\{1-\tau^2+\tau\lambda_i\colon\,1\le i\le n\}$ are the spectra of $A_\tau(G)$ and $(1-\tau^2)I_n +\tau A_\tau(G)$, respectively. We  have $\prod_{i=1}^{n}\lambda_i=\det A_\tau(G)$ and  $$\prod_{i=1}^{n}((1-\tau^2) +\tau \lambda_i)=\det ((1-\tau^2)I_n +\tau A_\tau(G)).$$
It follows from Eq. \eqref{pzt} that
\begin{eqnarray}
\prod_{i=1}^n\prod_{j=1}^m Z^{(\tau)}(\mu_i^{(j)})&=&\pm \left(\prod_{i=1}^n \lambda_i\right)^{\lfloor\frac{m}{2}\rfloor}\left(\prod_{i=1}^{n}(1-\tau^2+\tau \lambda_i)\right)^{\lfloor\frac{m-1}{2}\rfloor}\nonumber\\
&=&\pm (\det A_\tau(G))^{\lfloor\frac{m}{2}\rfloor}(\det ((1-\tau^2)I_n+\tau A_\tau(G)))^{\lfloor\frac{m-1}{2}\rfloor}\nonumber.
\end{eqnarray}
This completes the proof.
\end{proof}

\section*{Declaration of competing interest}
There is no conflict of interest.
\section*{Acknowledgments}
This work is supported by the National Natural Science Foundation of China (Grant Nos. 12001006 and 12101379).

%\bibitem{gutman1980} I. Gutman, Spectral properties of some graphs derived from bipartite graphs. MATCH Commun. Math. Comput. Chem. 8 (1980) 291-314.
%\bibitem{jacobs2011} D.~P.~Jacobs, M.~O.~Rayes, V.~Trevisan, The resultant of Chebyshev polynomials, Canad Math. Bull. 54 (2011) 288-296.
%\bibitem{louboutin2013}S. R. Louboutin, Resultants of Chebyshev polynomials: A short proof, Canad. Math. Bull. 56 (2013) 602-605.
%\bibitem{lhhDM} Z.~Lou, Q.~Huang,  X.~Huang, Construction of graphs with distinct eigenvalues, Discrete Math. 340 (2017) 607-616.
%\bibitem{wang2017JCTB} W.~Wang, A simple arithmetic criterion for graphs being determined by their generalized spectra, J. Combin. Theory, Ser. B 122 (2017) 438-451.
%\bibitem{dilcher2005} K. Dilcher, K.~B.~Stolarsky, Resultants and discriminants of Chebyshev and related polynomials, Trans. Amer. Math. Soc. 357 (2005) 965-981.
\end{document}